\documentclass[article]{siamart220329}
\usepackage{multirow}
\usepackage{lmodern}
\usepackage[T1]{fontenc}
\usepackage{listings}
\usepackage{amsmath}
\usepackage{amssymb}
\usepackage{url}
\usepackage{latexsym}
\usepackage{mathtools} 

\usepackage[square,comma,numbers,sort&compress]{natbib}

\usepackage{lineno}
\usepackage{algorithm} 
\usepackage{algpseudocode}

\newlength\myindent
\setlength\myindent{2em}

\newcounter{todocounter}

\newcommand{\R}{\mathbb{R}}

\newcommand{\ignore}[1]{}
\def\true{_{{true}}}

\newcommand{\rev}[1]{\textcolor{black}{#1}}

\title{H-CMRH: An Inner Product Free Hybrid Krylov Method for Large-Scale Inverse Problems}

\author{
Ariana~N.~Brown\thanks{Department of Mathematics, 
Emory University.  
Email: abro299@emory.edu.} \and
Malena Sabat\'e Landman\thanks{Department of Mathematics, 
Emory University.  
Email: msabat3@emory.edu.} \and
James~G.~Nagy\thanks{Department of Mathematics,
Emory University.  
Email: jnagy@emory.edu. This work was partially supported by the U.S.~National Science Foundation, Grants DMS-2038118 and DMS-02208294.}}

\begin{document}

\maketitle

\begin{abstract}
This study investigates the iterative regularization properties of two Krylov methods for solving large-scale ill-posed problems: the changing minimal residual Hessenberg method (CMRH) and a 
\rev{new} 
hybrid variant called the hybrid changing minimal residual Hessenberg method (H-CMRH). Both methods share the advantages of avoiding inner products, making them efficient and highly parallelizable, and particularly suited for implementations that exploit 
\rev{low }and mixed precision arithmetic. Theoretical results and extensive numerical experiments 
suggest that H-CMRH exhibits comparable performance to the established hybrid GMRES method in terms of stabilizing semiconvergence, but H-CMRH 
does not require any inner products, and requires less work and storage per iteration.
\end{abstract}

\begin{keywords}
inverse problems, Krylov methods, Tikhonov regularization, inner product free methods
\end{keywords}

\begin{MSCcodes}
AMS Subject Classifications: 65F22, 65F10, 49N45, 65K99
\end{MSCcodes}

\pagestyle{myheadings} \thispagestyle{plain} \markboth{
A.~N.~BROWN, M.~SABAT\'E LANDMAN, J.~G.~NAGY}{H-CMRH: Inner Product Free Hybrid Krylov Regularization}

\section{Introduction}

Linear large-scale discrete inverse problems can be written in the form
\begin{equation} \label{eq:1} 
Ax\true + e = b\,,
\end{equation} 
where $A \in \mathbb{R}^{n \times n}$ models the forward problem, $x\true \in \mathbb{R}^n$ is the unknown solution we want to approximate, $b \in \mathbb{R}^{n}$ is the vector of measurements, and $e \in \mathbb{R}^n$ represents errors in the measured data 
\rev{
(e.g., noise, measurement and discretization errors), and is typically
characterized as} independent and identically distributed (iid) zero mean normal random variables (so-called Gaussian white noise). Inverse problems of the form (\ref{eq:1}) arise in a variety of applications, including medical and geophysical imaging, 
and image deblurring \cite{Hansen2010, Vogel2002, Chung2011Inverse,Zhdanov2002}. Solving (\ref{eq:1}) is cumbersome due to its ill-posed nature, meaning that the singular values of the system matrix $A$ decay and cluster at zero without a gap to indicate numerical rank. This property, namely $A$ being moderately to severely ill-conditioned, causes the solution to be sensitive to perturbations in the measurement vector $b$; that is, small changes in $b$ can produce large changes in the estimate of the true solution $x\true$.

Regularization is typically used to alleviate this sensitivity \cite{Hansen2010} and enables the production of meaningful approximations of the true solution. For example, a very relevant approach when dealing with large-scale problems is to employ iterative regularization, 
which consists of applying an iterative solver to the least squares problem
\begin{equation} \label{eq:3}
    \min_{x \in \R^{n}} \| b - Ax \|^{2}\,,
\end{equation} 
where regularization is achieved through early termination of the iterations \cite{chung2024computational}. 
The stopping iteration acts \rev{as} a regularization parameter: 
if the iterative solver is terminated too soon, the solution is overly smooth, whereas the solution is highly oscillatory if the iterative solver is terminated too late. 

Alternatively, one can use a variational approach such as Tikhonov regularization, which consists of solving an optimization problem of the form
\begin{equation}  \label{eq:2} 
\min_{x \in \mathbb{R}^n} \| b - Ax \|^2 + \lambda^{2} \|Wx\|^2\,, 
\end{equation}
where $\lambda$ is the regularization parameter and $\|Wx\|^2$  \rev{is a regularization term}, where $W \in \mathbb{R}^{n \times n}$ \rev{is often chosen to be the identity matrix or a discrete approximation of a differentiation operator \cite{chung2024computational}.} 
Similarly to the role of the stopping iteration in iterative regularization, it is important to choose a good value for $\lambda$; if it is chosen too large, the regularized solution is overly smooth, and a choice of $\lambda$ that is too small produces a highly oscillatory solution. Moreover, 
multiple linear solves might be needed to refine the choice of $\lambda$, drastically increasing the computational cost. 
Thus, when dealing with large scale problems, it may be necessary to solve (\ref{eq:2}) with an iterative method. 

Hybrid regularization is a particular combination of variational and iterative regularization. This approach consists of iteratively projecting (\ref{eq:3}) onto a small subspace of increasing dimension and applying variational (e.g., Tikhonov) regularization to the small projected problem. In this framework, we only require matrix-vector products with $A$, which allows us to avoid explicitly constructing or storing $A$. It also creates a natural environment, namely the small projected problems, for estimating a good regularization parameter \cite{chung2024computational}.

\rev{Currently, all existing hybrid regularization algorithms require inner products. In some cases, this can be a computational burden, for example in distributed memory implementations with a large number of processors, the inner products (requiring global-communication) can be a limiting factor for efficiency, see, e.g. \cite{Saad1989Supercomputers, 1994templates}. Moreover, inner products can also affect the performance of algorithms in low and/or mixed precision arithmetic, where norms of large vectors can lead to under- or overflow, and where the notion of numerical orthogonality depends on the working floating point arithmetic, which can lead to early stopping of the traditional methods. A few inner product free iterative regularization methods exist but generally exhibit very slow convergence properties, or require information that might be difficult to accurately estimate for large-scale problems. The most popular family of algorithms of this kind consists of Chebyshev semi-iteration methods, see e.g. \cite{VARGA1961, Bjorck1996Numerical, 1994templates}, which require spectral knowledge about the system matrix $A$.  Alternatively, two simpler algorithms that do not require inner products are Landweber \cite[Chapter 6.1.1]{Hansen2006deblurring} and Richardson (first-order) \cite[Chapter 7.2.3]{Bjorck1996Numerical} methods. Note that the first-order Richardson method corresponds to applying Landweber to the normal equations. Moreover, this is equivalent to gradient descent with a fixed step length that depends on spectral bounds for $A$, requires positive definiteness of $A^T A$ to converge, and are generally slow to converge.
}

\rev{
In this context, the changing minimal residual Hessenberg method (CMRH) \cite{sadok1999new}  appears as a potential inner product free alternative. As a Krylov subspace method, it has much faster convergence properties than Chebyshev, Landweber and Richardson iterations. However, its suitability for ill-posed problems, in particular if it could be used as an iterative regularization method, has not been previously studied. In this paper, we analyze the CMRH iterative regularization properties for the first time and, given the discovered suitability of this method in this context, we also introduce a new hybrid projection algorithm known as the hybrid changing minimal residual Hessenberg method (H-CMRH).} 

\rev{In summary, this paper presents the following new contributions: 
\begin{itemize}
    \item 
    Establishing CMRH as an iterative regularization method and therefore opening its potential to a wide range of new applications. This in-depth study offers a combination of theoretical justifications based on bounds for the residual norm as well as a new understanding of the underlying properties of the projected problems. 
    We establish that CMRH is a regularization method that effectively acts as a filtering method, and this is supported by relevant empirical evidence. 
    Proposing a new hybrid version of the CMRH method, which is currently the only existing inner product free hybrid method. We emphasize that the construction of this algorithm is only meaningful after the regularization properties of CMRH have been established.
    \item Demonstrating the performance of CMRH in an array of ill-posed problems, including comparisons with other inner product free methods and GMRES. We also include a low-precision arithmetic case study, where we highlight certain pitfalls of GMRES that are avoided by using CMRH.
\end{itemize}}

An outline for this paper is as follows. In Section \ref{sec:CMRH}, we review the CMRH method and its relationship with the generalized minimal residual algorithm (GMRES). In Section \ref{sec:regularization_properties}, we describe the regularization properties of CMRH and provide illustrative numerical examples. In Section \ref{sec:H-CMRH} we introduce the H-CMRH method and discuss its properties. In Section \ref{sec:numerics} we illustrate the effectiveness of H-CMRH using different numerical examples of image deblurring problems and provide some closing remarks in Section \ref{sec:conclusions}. Throughout the paper, we assume that $\| \cdot \|$ is the Euclidean norm.

\section{The Changing Minimal Residual Hessenberg Method}\label{sec:CMRH}
Both CMRH in \cite{sadok1999new} and GMRES \cite{saad2003iterative} iteratively approximate the solution of (\ref{eq:3}) on \rev{the following} Krylov subspace of increasing dimension: $$ {\cal{K}}_{k} = span \{r_0, Ar_0,A^2r_0, .....,A^{k-1}r_0 \} $$ 
where $r_0 = b-Ax_0$ and $x_0 $ is an initial guess for the solution.

Within each of these two iterative solvers, different basis vectors of ${\cal{K}}_k$ are formed. On one hand, CMRH utilizes the Hessenberg process, which produces linearly independent basis vectors. On the other hand, for GMRES, the Arnoldi algorithm forms orthonormal basis vectors of ${\cal{K}}_k$. Despite the lack of orthonormal vectors, CMRH is still advantageous. For dense problems, the matrix-vector product with $A$ can be calculated without accessing 
\rev{the first $k-1$ columns of $A$
(this is because, as will be seen in Algorithm~\ref{alg:arbitrarySample1}, the basis vectors contain zeros in the first $k-1$ entries), thus giving possible savings in the cost of the matrix-vector product \cite{sadok2011new}.} 
More generally, CMRH does not require inner products, which is important for computer efficiency, parallel computing and possibly reducing errors caused by inner products in low precision.

\subsection{The Hessenberg Process and CMRH}
Consider the following Krylov matrix: 
\begin{equation} \label{eq:4} 
V_k = [r_0, Ar_0, A^2r_0, .........,A^{k-1}r_0] \in \mathbb{R}^{n \times k}, 
\end{equation} 
where $V_{k+1} =  [r_0, \,AV_k]$ and clearly range$(V_k) = {\cal{K}}_k$. Note that $V_k$ becomes very ill-conditioned even for small numbers of $k$, so this matrix is not constructed explicitly, and is only considered theoretically to motivate the Hessenberg process, and later on its relationship with the Arnoldi process.

The Hessenberg process can be derived by considering the LU factorization of \begin{equation} \label{eq:LU} 
V_{k} = L_{k}U_{k},
\end{equation}
where $L_k \in \mathbb{R}^{n \times k}$ is a unit lower triangular matrix and $U_{k}$ is an upper triangular matrix. However, the algorithm does not explicitly compute this factorization but instead recursively computes the columns of $L_k$. Following \cite{sadok2011new} we can write the following relation: 
\begin{equation} \label{eq:5} 
V_{k+1} \begin{bmatrix} 0_{1 \times k} \\ I_{k}\end{bmatrix} = L_{k+1}U_{k+1} \begin{bmatrix} 0_{1 \times k} \\ I_{k} \end{bmatrix} = AV_{k} = AL_{k}U_{k},
\end{equation} 
where $0_{1 \times k}$ is a row vector of zeros of dimensions $1$$\times$$k$.  One can then define an upper Hessenberg matrix of the form: 
\begin{equation} \label{eq:6} 
H_{{k+1},k }= U_{k+1} \begin{bmatrix} 0_{1 \times k} \\ I_{k} \end{bmatrix}U_{k}^{-1} \in \mathbb{R}^{(k+1) \times k},
\end{equation} 
where $k < n$. 
Note that the columns of $L_{k}$ are linearly independent vectors and form a basis of the Krylov subspace ${\cal{K}}_k$. Moreover, combining (\ref{eq:5}) and (\ref{eq:6}) leads to the following Hessenberg relation: 
\begin{equation} \label{eq:7} 
AL_{k} = L_{k+1} H_{{k+1},k}. 
\end{equation}

An implementation of the Hessenberg process \cite{sadok2008new} can be found in Algorithm \ref{alg:arbitrarySample1}. The algorithm requires providing a nonzero vector $r_0$; we discuss below the specific choice for this vector in the CMRH method. 
\rev{Note that in line 3 of Algorithm~\ref{alg:arbitrarySample1}, the vector $l_k$ is a column of a lower triangular matrix, so its first $k-1$ entries are zero. Thus the matrix-vector product with $A$ does not need to access the first $k-1$ columns of $A$, which can give a possible savings in cost of the matrix-vector product. 
}

\begin{algorithm}[H]
\caption{Hessenberg Process} \label{alg:arbitrarySample1}
\begin{algorithmic}[1]
\State $\beta = e_{1}^{T}r_0$ (first entry of given, nonzero vector $r_0$); $l_1 = r_0 / \beta$
\For{$k = 1,....,m$}
\State $u = Al_k$
\For{$j = 1,....,k$}
\State $h_{j,k} = u(j)$; $u = u - h_{j,k}l_j$
\EndFor
\State $h_{k+1,k} = u(k+1)$; $l_{k+1} = u/h_{k+1,k}$
\EndFor
\end{algorithmic} 
\end{algorithm} 

Examining this implementation, we find that if $\beta = 0$ or $h_{k+1,k} = 0$, then the Hessenberg process will breakdown. To circumvent this breakdown, a pivoting approach is introduced in \cite{sadok2008new}. The Hessenberg process with pivoting is described in Algorithm \ref{alg:arbitrarySample2}.

\begin{algorithm}[H]
\caption{Hessenberg Process with Pivoting} \label{alg:arbitrarySample2}
\begin{algorithmic}[1]
\State $p = [1, 2, ...., n]^{T}$, and let $r_0$ be a given, nonzero vector
\State Determine $i_0$ such that $|r_0(i_0)| = \|r_0\|_{\infty}$
\State $\beta = r_0(i_0)$ ; $ l_1 = r_0/ \beta$; $p_1 \Leftrightarrow p_{i_0}$
\For{$k = 1,....,m$}
\State $u = A l_k$
\For{$j = 1,....,k$}
\State $h_{j,k} = u(p(j))$ ; $u = u - h_{j,k} l_j$
\EndFor
\If{$k < n$ and $u \neq 0$}
\State Determine $i_0 \in \{ k+1, ......., n\}$ such that $|u(p(i_0))| = \| u(p(k+1 : n)) \|_{\infty}$
\State $h_{k+1,k} = u(p(i_0))$; $l_{k+1} = u/ h_{k+1,k}$ ; $p_{k+1} \Leftrightarrow p_{i_0}$
\Else
\State $h_{k+1,k} = 0$; Stop.
\EndIf
\EndFor
\end{algorithmic} 
\end{algorithm} 

At each iteration $k$, CMRH finds an approximate solution of (\ref{eq:3})  by minimizing the following least squares problem: 
\begin{equation} \label{eq:8} 
\min_{x \in {\cal{K}}_{k}} \| L^{\dagger}_{k+1} (b - Ax) \| , 
\end{equation} 
where $L^{\dagger}_{k+1}$ is the pseudoinverse of the lower triangular matrix $L_{k+1}$ (which is constructed with the Hessenberg process). We observe that CMRH is a projection method since, at each iteration, it considers a solution on the Krylov subspace ${\cal{K}}_k$. However, due to $L_{k+1}^{\dagger}$ \rev{not having orthogonal columns}, 
the subproblems of the form \eqref{eq:8} solved at each iteration are oblique projections of the original problem (\ref{eq:3}), and therefore CMRH is not exactly a minimal residual method. For that reason, CMRH holds some parallels with the quasi-minimal residual method (QMR), in that the imposed optimality conditions involve minimizing a semi-norm \cite{sadok2016algorithms}. Considering $x=x_0+L_ky$, and $r_0 = b - A x_0$, the norm in (\ref{eq:8}) can be rewritten as:
\begin{eqnarray*}
  \| L^{\dagger}_{k+1} (b - A(x_0 + L_k y)) \| 
 &=& \|L^{\dagger}_{k+1} (r_0 - A L_k y) \| \\
  &=&  \| L^{\dagger}_{k+1} (r_0 - L_{k+1} H_{k+1,k}y) \| \\
   &=&  \| \beta e_{1} - H_{k+1,k}y \|, 
 \end{eqnarray*}
where we have used the Hessenberg relation \eqref{eq:6}, and where $\beta$ is the first entry in $r_0$ (if using Algorithm~\ref{alg:arbitrarySample1}) or the largest entry in $r_0$ (if using Algorithm~\ref{alg:arbitrarySample2}). Thus, at each iteration, we solve a small subproblem on a space of increasing dimension
\begin{equation*}  
y_k =  \arg \min_{y \in \mathbb{R}^{k}} \| \beta e_1 - H_{k+1\rev{,k}}y \| 
\end{equation*} 
and then project back using $x_k=x_0+L_ky\rev{_k}$.

\subsection{The Relationship Between CMRH and GMRES}

\rev{In this section, we recall the theoretical relationship between the basis vectors produced by GMRES and CMRH, as well as bounds on the difference between their relative residual norms. We include detailed derivations for completion, as well as to guide the proofs for the hybrid version of the algorithm presented in Section \ref{sec:H-CMRH}. However, this is not sufficient to explain the regularization properties of CMRH, as it is explained in Section \ref{sec:regularization_properties}}.

Similar to CMRH, one can derive the GMRES method by considering the QR factorization of the Krylov matrix defined in (\ref{eq:4}): 
\begin{equation}\label{eq:QR}
    V_k = Q_{k} \tilde{R}_k,
\end{equation}
where $Q_k \in \mathbb{R}^{n \times k}$ has orthonormal columns and $\tilde{R}_k$ is an upper triangular matrix. This algorithm does not explicitly compute the QR factorization, but instead recursively computes the columns of $Q_k$. From \cite{sadok2011new} we consider the following relationship: \begin{equation} \label{eq:10} V_{k+1} \begin{bmatrix}
    0_{1 \times k} \\ I_k
\end{bmatrix}  = Q_{k+1} \tilde{R}_{k+1} \begin{bmatrix}
    0_{1 \times k} \\ I_{k}
\end{bmatrix} = AV_{k} = AQ_{k}\tilde{R}_k, \end{equation}
where $0_{1 \times k}$ is a row vector of zeros of dimensions $1$$\times$$k$ and which produces an upper Hessenberg matrix of the form: 
\begin{equation} \label{eq:11} 
H_{k+1,k}^{A} = \tilde{R}_{k+1} \begin{bmatrix}
    0_{1 \times k} \\ I_{k}
\end{bmatrix} \tilde{R}_{k}^{-1} \in \mathbb{R}^{(k+1) \times k}, 
\end{equation} 
where $k<n$. Note that the columns of $Q_k$ form an orthonormal basis of ${\cal{K}}_k$. Moreover, combining (\ref{eq:10}) and (\ref{eq:11}) creates the Arnoldi relation:
\begin{equation} \label{eq:Arn} 
A_k Q_k = Q_{k+1}H_{k+1,k}^{A}. 
\end{equation}

The derivation of (\ref{eq:Arn}) and (\ref{eq:7}) naturally provides a mapping between the Arnoldi and  Hessenberg bases. Consider the LU and QR factorizations of the Krylov matrix $V_k$ in \eqref{eq:LU} and \eqref{eq:QR}, respectively, then we can define
\begin{equation} \label{eq:13} 
V_k = L_k U_k = Q_k \tilde{R}_k.
\end{equation}
Now, let $R_k = \tilde{R}_k U_{k}^{-1}$: this is a\rev{n} $k \times k$ upper triangular matrix. Using (\ref{eq:13}), the lower triangular matrix $L_k$ can be written as a composition of an orthogonal matrix and an upper right triangular matrix, 
$$L_k = Q_k \tilde{R}_k U_k^{-1}.$$
This corresponds to a QR factorization of $L_k$,
\begin{equation} \label{eq:14} 
L_k = Q_k R_k. 
\end{equation}

In addition to (\ref{eq:14}), we can rewrite the Arnoldi relation in (\ref{eq:Arn}) as:
\begin{equation} \label{eq:15} 
AL_k R^{-1}_k = L_{k+1}R^{-1}_{k+1} H_{k+1,k}^{A}. 
\end{equation}
Comparing the Hessenberg \eqref{eq:7} and Arnoldi \eqref{eq:15} relations, and following \cite{sadok2011new}, we provide the following proposition :
\begin{proposition}\label{prop:1}
    Let $H_{k+1,k}$ and $H^{A}_{k+1,k}$ be the Hessenberg matrices associated to the Hessenberg and Arnoldi processes, respectively, at iteration $k$, then $$H_{k+1,k} = R^{-1}_{k+1} H_{k+1,k}^{A} R_{k},$$ or, equivalently, $H^{A}_{k+1,k} = R_{k+1}H_{k+1,k}R_{k}^{-1}.$
\end{proposition}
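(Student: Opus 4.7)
The plan is to obtain two different factorizations of the same quantity $AL_k$ and then cancel the common left factor. Equation (7) already gives one such expression, namely $AL_k = L_{k+1} H_{k+1,k}$. The second expression comes from equation (15): multiplying $AL_k R_k^{-1} = L_{k+1} R_{k+1}^{-1} H_{k+1,k}^{A}$ on the right by $R_k$ yields $AL_k = L_{k+1} R_{k+1}^{-1} H_{k+1,k}^{A} R_k$. This operation is legitimate because $R_k$ is the upper triangular factor in the QR factorization (14) of $L_k$, and since the columns of $L_k$ are linearly independent (the matrix is unit lower triangular in the leading block), $R_k$ is a nonsingular upper triangular matrix; the same observation applies to $R_{k+1}$.

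Equating the two expressions for $AL_k$ gives the identity
\begin{equation*}
L_{k+1} H_{k+1,k} \;=\; L_{k+1} R_{k+1}^{-1} H_{k+1,k}^{A} R_k .
\end{equation*}
From here, I would cancel $L_{k+1}$ on the left. Since $L_{k+1} \in \mathbb{R}^{n \times (k+1)}$ has full column rank, it admits a left inverse (for instance $L_{k+1}^\dagger = (L_{k+1}^T L_{k+1})^{-1} L_{k+1}^T$); left-multiplying by $L_{k+1}^\dagger$ immediately yields the first identity $H_{k+1,k} = R_{k+1}^{-1} H_{k+1,k}^{A} R_k$. The equivalent form $H_{k+1,k}^{A} = R_{k+1} H_{k+1,k} R_k^{-1}$ then follows by pre-multiplying by $R_{k+1}$ and post-multiplying by $R_k^{-1}$.

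The hard part, if there is one, is really just a matter of being precise about invertibility and cancellation: one has to confirm that both $R_k$ and $R_{k+1}$ are nonsingular (which follows from the full column rank of $L_k$ and $L_{k+1}$) and that the rectangular factor $L_{k+1}$ can be cancelled on the left (which follows from the same full column rank, via any left inverse). No further computation with the entries of the Hessenberg matrices is needed; the proposition is essentially the statement that the change of basis from the Arnoldi basis $Q_{k+1}$ to the Hessenberg basis $L_{k+1} = Q_{k+1} R_{k+1}$ conjugates the two Hessenberg reductions of $A$ restricted to $\mathcal{K}_{k+1}$.
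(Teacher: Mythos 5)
Your proof is correct and follows essentially the same route the paper sketches: it compares the Hessenberg relation \eqref{eq:7} with the rewritten Arnoldi relation \eqref{eq:15} and cancels $L_{k+1}$, merely making explicit the invertibility of $R_k$, $R_{k+1}$ and the left-cancellation via full column rank that the paper leaves implicit.
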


Proposition~\ref{prop:1} is used to establish residual bounds for CMRH in the following sense: the residual norm associated to the approximated solution provided by CMRH at each iteration is close to the residual norm associated to the solution provided by GMRES if the condition number of $R_{k+1}$ does not grow too quickly. This can be observed in the following theorem, originally proved in \cite{sadok2011new}. Here, we restate it and provide additional details of the proof in order to set the stage for an analogous reasoning involving the hybrid CMRH algorithm (H-CMRH) in Section \ref{sec:H-CMRH}.

\begin{theorem}
Let $r_k^G$ and $r_k^C$ be the GMRES and CMRH residuals at the kth iteration beginning with the same initial residual $r_0$, respectively. Then 
\begin{equation}\label{eqthm} \|r_k^G\| \leq \|r_k^C\| \leq \kappa (R_{k+1}) \|r_k^G \| \end{equation} where $\kappa (R_{k+1}) = \|R_{k+1}\| \|R_{k+1}^{-1}\|$ is the condition number of $R_{k+1}$.
\end{theorem}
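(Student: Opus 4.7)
The plan is to prove the two inequalities separately, exploiting the fact that both methods minimize a quantity over the same Krylov subspace, but with respect to different norms.

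For the lower bound, I would start from the observation that GMRES is defined as the true minimizer of $\|b - Ax\|$ over $x_0 + \mathcal{K}_k$. Since CMRH also produces an iterate in $x_0 + \mathcal{K}_k$, it must hold that $\|r_k^G\| \leq \|r_k^C\|$ directly by optimality. This part is essentially a one-line argument.

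The upper bound is the substantive part, and the key idea is to transfer the CMRH minimization problem into the GMRES coordinate system using Proposition~\ref{prop:1} and the factorization $L_k = Q_k R_k$. First I would express both residuals in closed form: using $r_0 = L_{k+1}(\beta e_1) = Q_{k+1}(\|r_0\|e_1)$ together with the Hessenberg and Arnoldi relations, one gets $r_k^C = L_{k+1}(\beta e_1 - H_{k+1,k} y_k^C)$ and $\|r_k^G\| = \| \|r_0\| e_1 - H_{k+1,k}^A z_k^G \|$ (the latter because $Q_{k+1}$ has orthonormal columns). Then I would use the CMRH optimality for the choice $y = R_k^{-1} z_k^G$, giving
\begin{equation*}
\|\beta e_1 - H_{k+1,k}\, y_k^C\| \;\leq\; \|\beta e_1 - H_{k+1,k}\, R_k^{-1} z_k^G\|.
\end{equation*}
Multiplying inside by $R_{k+1}$ and using Proposition~\ref{prop:1} to convert $R_{k+1} H_{k+1,k} R_k^{-1}$ into $H_{k+1,k}^A$, together with $R_{k+1}(\beta e_1) = \|r_0\| e_1$, transforms the right-hand side into a GMRES-like residual expression. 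Finally, bounding $\|r_k^C\| = \|L_{k+1} v\|$ via $\|L_{k+1}\| \|v\|$ and using $\|L_{k+1}\| = \|R_{k+1}\|$ (since $L_{k+1} = Q_{k+1} R_{k+1}$ with $Q_{k+1}$ having orthonormal columns) together with $\|R_{k+1}^{-1}\|$ from the transformation yields the factor $\kappa(R_{k+1})$.

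The main obstacle I anticipate is bookkeeping: there are two Hessenberg matrices $H_{k+1,k}$ and $H_{k+1,k}^A$, two basis matrices $L_{k+1}$ and $Q_{k+1}$, and two first-entry scalars ($\beta$ versus $\|r_0\|$), and one has to be careful in verifying that $R_{k+1}(\beta e_1) = \|r_0\| e_1$ so that the translation between the CMRH and GMRES projected problems is exact. Once these identifications are settled, the proof reduces to applying the submultiplicativity of the spectral norm at the right step and recognizing $\|R_{k+1}\|\,\|R_{k+1}^{-1}\|$ as $\kappa(R_{k+1})$.
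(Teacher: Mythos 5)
Your proposal is correct and follows essentially the same route as the paper: both bounds hinge on GMRES optimality for the lower inequality, and on CMRH optimality tested at the GMRES iterate, the change of basis $L_{k+1}=Q_{k+1}R_{k+1}$ (which supplies $\|R_{k+1}^{-1}\|$ and $\|L_{k+1}\|=\|R_{k+1}\|$) for the upper one. The only cosmetic difference is that you phrase the comparison in the projected coordinates via Proposition~\ref{prop:1} and the identity $R_{k+1}(\beta e_1)=\|r_0\|e_1$, whereas the paper works directly with the coefficient vectors $u_k^C=L_{k+1}^{\dagger}r_k^C$ and $u_k^G=L_{k+1}^{\dagger}r_k^G$; both identifications check out.
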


\begin{proof}

First, we prove the left inequality in (\ref{eqthm}). Consider the residual as a function of the solution: $$ r(x) = b - Ax.$$ Then, the kth residual norm associated to the approximated solution produced by GMRES is:
$$ \|r_k^G\| = \|b - Ax_k^G\| = \min_{x \in {\cal{K}}_k} \| r(x) \|.  $$ 
Since $x_k^G$ and $x_k^C$ are in the Krylov subspace ${\cal{K}}_{k}$, then by definition:
$$\min_{x \in {\cal{K}}_k} \|r(x) \| \leq \|r_k^C \| =\| r(x_k^C)\|. $$
Hence, $\|r_k^G\| \leq \|r_k^C \|$.

Now we prove the right inequality in (\ref{eqthm}). Since $r_k^C$ and $r_k^G$ are in the ${\cal{K}}_{k+1}$ subspace, we can write $r_k^C$ and $r_k^G$ as a linear combination of any basis of ${\cal{K}}_{k+1}$. Using the Hessenberg relation, the decomposition of the Krylov matrix is:  $$V_{k+1} = L_{k+1} U_{k+1}.$$ This implies range$(L_{k+1})$ = range$ (V_{k+1}) = {\cal{K}}_{k+1}$. 
Therefore, and using \eqref{eq:14}, there exist $u_k^C$ and  $w_k^C$ in $\mathbb{R}^{k+1}$ such that $$r_k^C= L_{k+1}u_k^C = Q_{k+1} R_{k+1} u_k^C = Q_{k+1}w_k^C$$ with $R_{k+1}u_k^C = w_k^C$. Analogously, there exist  $u_k^G$ and $w_k^G$ in $\mathbb{R}^{k+1}$ such that
\begin{equation}
\label{eq:pie} 
r_k^G = L_{k+1}u_k^G = Q_{k+1}R_{k+1}u_k^G = Q_{k+1}w_k^G \end{equation} 
with $R_{k+1} u_k^G = w_k^G$.

Consider the optimality conditions of CMRH. As stated above, $r_k^C = L_{k+1} u_k^C$. This implies that $L^{\dagger}_{k+1} r_k^C = u_k^C$. 
Hence, $\|L^{\dagger}_{k+1} r_k^C\| = \|u_k^C\|$ so 
\begin{equation}
\label{eq:pi} 
\|u_k^C\| = \min_{x \in {\cal{K}}_k} \|L^{\dagger}_{k+1} (b-Ax)\| =  \min_{x \in {\cal{K}}_k} \| L_{k+1}^{\dagger} r(x) \| 
\end{equation}
Using (\ref{eq:pi}) and the fact that $x_k^G$ is in ${\cal{K}}_k$ then  $\|u_k^C\| \leq \|u_k^G\|$. Thus $$ \|u_k^C\| \leq \|u_k^G\| = \|R_{k+1}^{-1}w_k^G\| \leq \|R_{k+1}^{-1}\| \|w_k^G\| = \|R^{-1}_{k+1}\| \|r_k^G\|,$$
where the equalities in the above relation come from (\ref{eq:pie}). On the other hand, and also using (\ref{eq:pie}): $$\|r_k^C\| = \|L_{k+1}u_k^C\| \leq \|L_{k+1}\| \|u_k^C\|. $$
Putting the above inequalities together gives the following relation:
\begin{eqnarray*}
  \| r_k^C \| 
 &=& \|L_{k+1} u_k^C \| \\
  &\leq&  \| L_{k+1} \|  \|u_k^C\| \\
   &\leq&  \| L_{k+1} \| \| R^{-1}_{k+1} \| \|r_k^G \|. 
 \end{eqnarray*}
Recall that $L_{k+1}$ has a QR decomposition (\ref{eq:14}) of the form $L_{k+1} = Q_{k+1}R_{k+1}$, where $Q_{k+1}$ is an orthogonal matrix. Therefore, $\|L_{k+1}\| = \|Q_{k+1}R_{k+1}\| = \|R_{k+1}\|$. This results in the following:  $$\|r_k^C\| \leq \|L_{k+1}\| \|R^{-1}_{k+1}\| \|r_k^G\| = \|R_{k+1}\|\|R^{-1}_{k+1}\| \|r_k^G\| = \kappa (R_{k+1})\|r_k^G\|. $$
Thus, we conclude that $\|r_k^G\| \leq \|r_k^C\| \leq \kappa (R_{k+1}) \|r_k^G\|$.
\end{proof}

\rev{
If we compare the computational cost between GMRES and CMRH, each algorithm requires one matrix-vector product per iteration, which is typically the most expensive computation in an iterative method. Additional costs related to vector operations are also similar, except that GMRES requires $k+1$ inner products at the $k$-th iteration, while no inner products need to be computed in CMRH. Note that, in the cases where the number of required iterations is large and matrix-vector products can be done very efficiently, the growing number of inner products required by GMRES can be a computation bottleneck. In these cases, CMRH presents a computational advantage with respect to GMRES.
}

\section{Regularizing Properties of  CMRH}\label{sec:regularization_properties}

\rev{
This section is devoted to providing a 
theoretical understanding 
that underlies the regularizing mechanisms of CMRH. These theoretical results are supported by 
(rather technical) numerical examples, which are designed to highlight the behaviour of CMRH in this context rather than its performance. 
Note that 
a variety of additional numerical examples on the performance of CMRH for ill-posed problems can be found in Section \ref{sec:numerics}.} 

As an iterative solver, CMRH produces a sequence of approximate solutions. In \rev{this paper we observe that, in} the early iterations, these approximations begin to converge toward the true solution, $x\true$; however, if the iterations are not stopped early, the approximate solutions eventually deviate from $x\true$ as they continue to converge to the solution $x = A^{\dagger} b = A^{\dagger}(Ax\true + e)$ of the least squares problem \eqref{eq:3}, which is a poor approximation of $x\true$ due to the ill-conditioning of $A$ and the noise, $e$. This is known as semiconvergence \cite{Hansen2010}. To overcome semiconvergence, we must stop CMRH at the right time. 

An analysis of the SVD of the projected matrix $H_{{k+1},k}$ from (\ref{eq:7}) provides a basis for understanding the regularizing properties of CMRH. Moreover, our hypothesis is that the approximated solutions produced by CMRH mirror regularized (filtered) solutions, so the spectral filtering properties of CMRH are also studied empirically in this section. 

\subsection{Singular Value Decomposition Analysis}
It has been observed that the singular values of the projected matrix $H^A_{k+1,k}$ in (\ref{eq:Arn}) for GMRES (and the corresponding projected matrix for QMR \cite{Gazzola2015OnKP}) tend to approximate the large singular components of $A$. This helps to explain the regularizing properties of these Krylov methods and the initial fast decay of the relative residual norms of QMR and GMRES \cite{Gazzola2015OnKP}. 

In this section, we compare the largest singular values of \rev{the} matrix $A$ with the singular values of the upper Hessenberg matrix $H_{{k+1},k}$ defined in (\ref{eq:7}) \rev{and} obtained at each iteration of CMRH. To analyze how well the singular values from $H_{{k+1},k}$ approximate those of $A$, in Figures \ref{fig:RegToolsSVD} and \ref{fig:RegToolsSV2} we plot the largest singular values of the full matrix $A$ using horizontal lines. The dots represent the singular values of the projected matrices at each iteration $k$ (see top plots of Figures \ref{fig:RegToolsSVD} and \ref{fig:RegToolsSV2}). As another way of visualizing this, the bottom plots display the singular values of the upper Hessenberg matrix at different iterations of CMRH against the singular values of $A$. Each figure represents a different $1$D inverse problem; the first three (Deriv2, Heat, and Shaw) are from the Regularization Tools Package~\cite{reg}, and the fourth example (Spectra) is a $1$D signal restoration problem, with a matrix modeling a Gaussian blur. Specifically, the entries of $A$ are given by
\begin{equation}
\label{eq:SpectraMatrix}
a_{ij} = \frac{1}{\varsigma \sqrt{2\pi}} \exp\left(-\frac{(i-j)^2}{2{\varsigma^2}}\right)\,,
\end{equation}
with $\varsigma = 2$, and $x\true$ is a simulated x-ray spectrum \cite{Trussell1983Convergence}.
\begin{figure}[htbp]
\begin{tabular}{cc}
    {\scriptsize Deriv2} &  {\scriptsize Heat} \\ 
    \includegraphics[width=6cm]{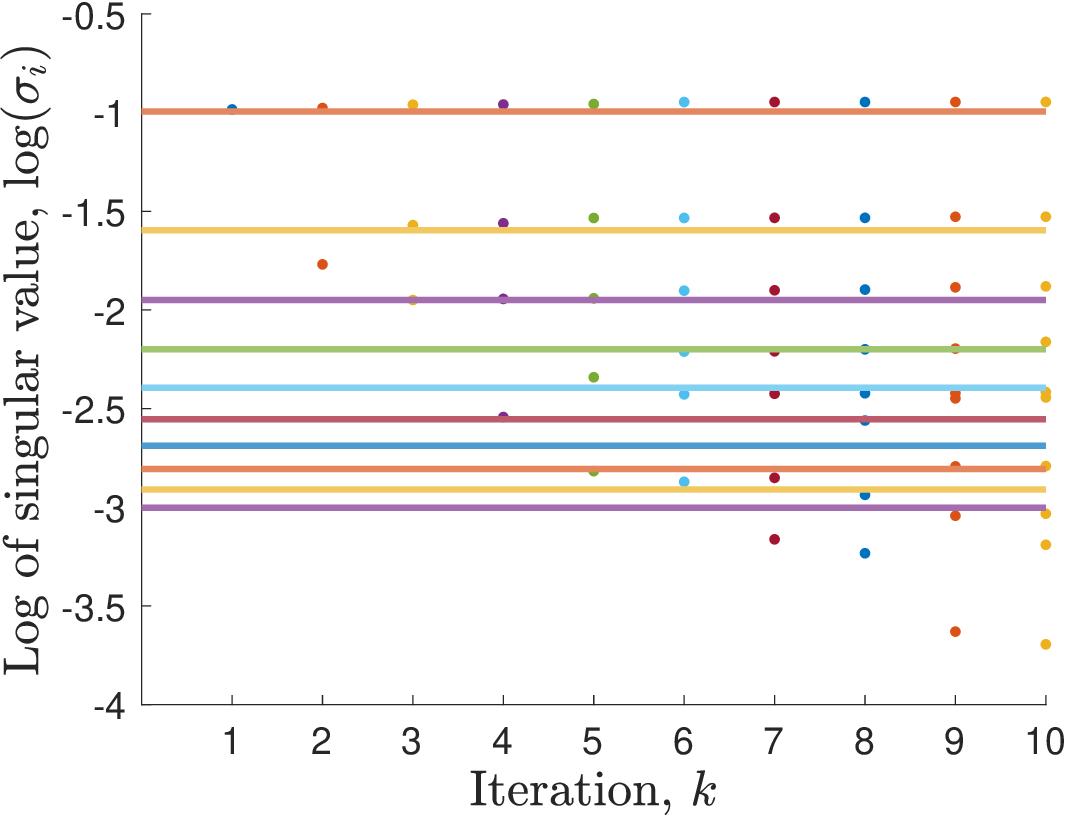} &  \includegraphics[width=6cm]{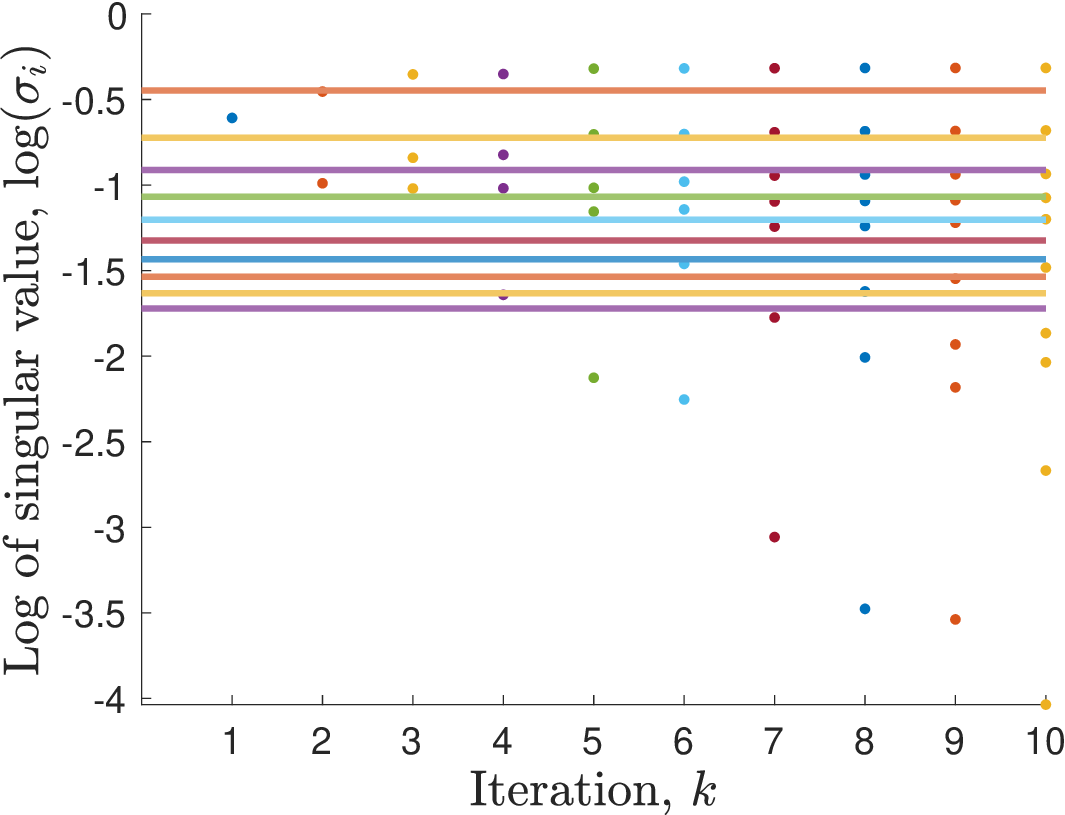} \\
     
\end{tabular} \\
\begin{tabular}{cc}
    \includegraphics[width=6cm]{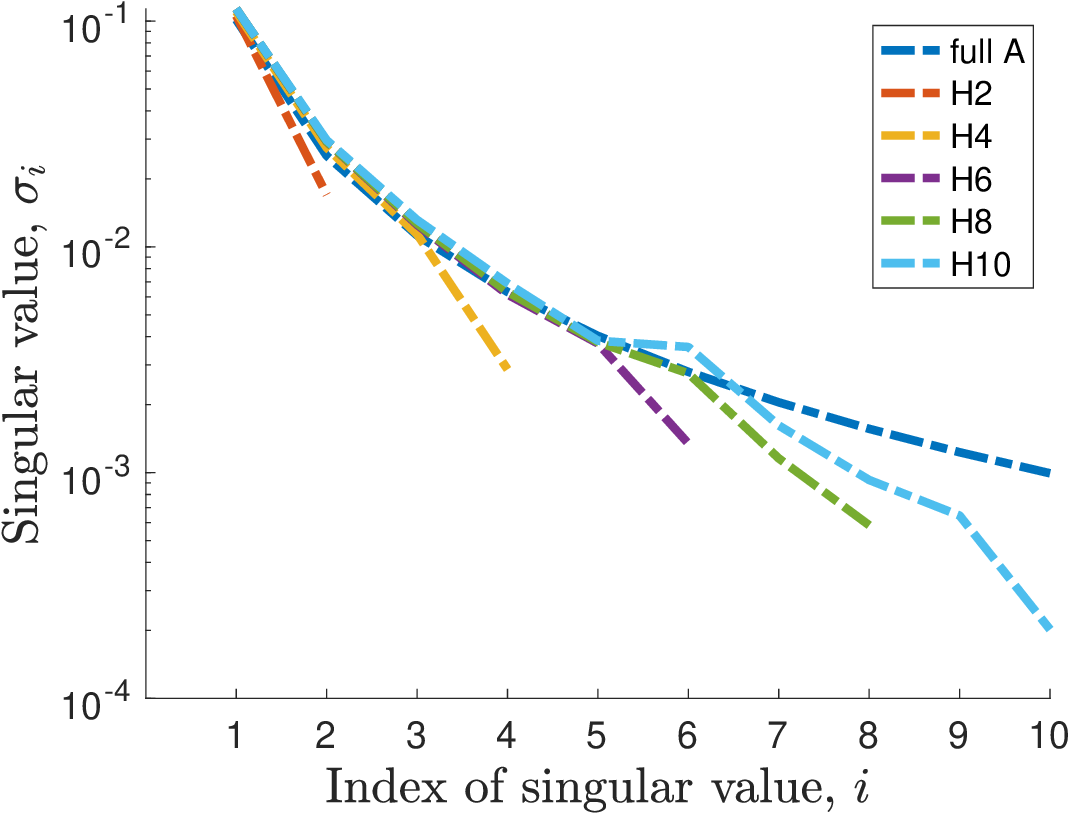} &  \includegraphics[width=6cm]{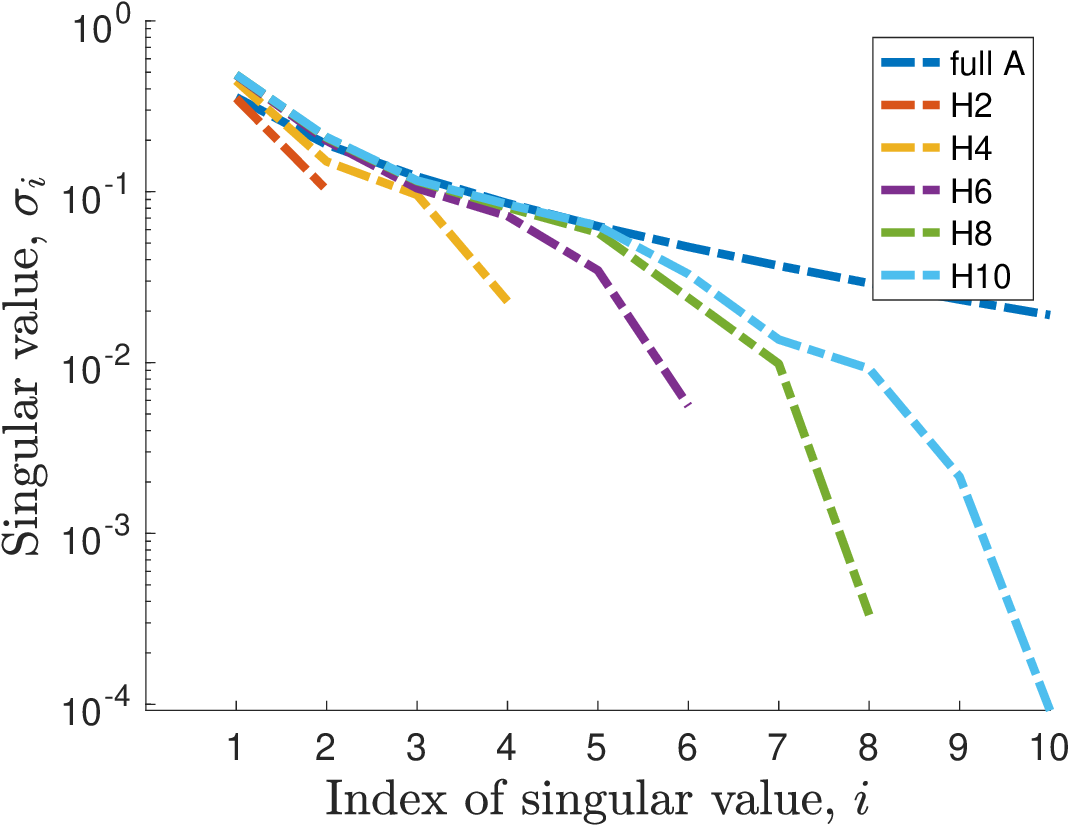} \\
\end{tabular}
\vspace{0.1cm}
\caption{Singular values for Deriv2-example 2 and Heat.}
\label{fig:RegToolsSVD}
\end{figure}

\begin{figure}[t]
\begin{tabular}{cc}
  {\scriptsize Shaw} & {\scriptsize Spectra} \\
     \includegraphics[width=6cm]{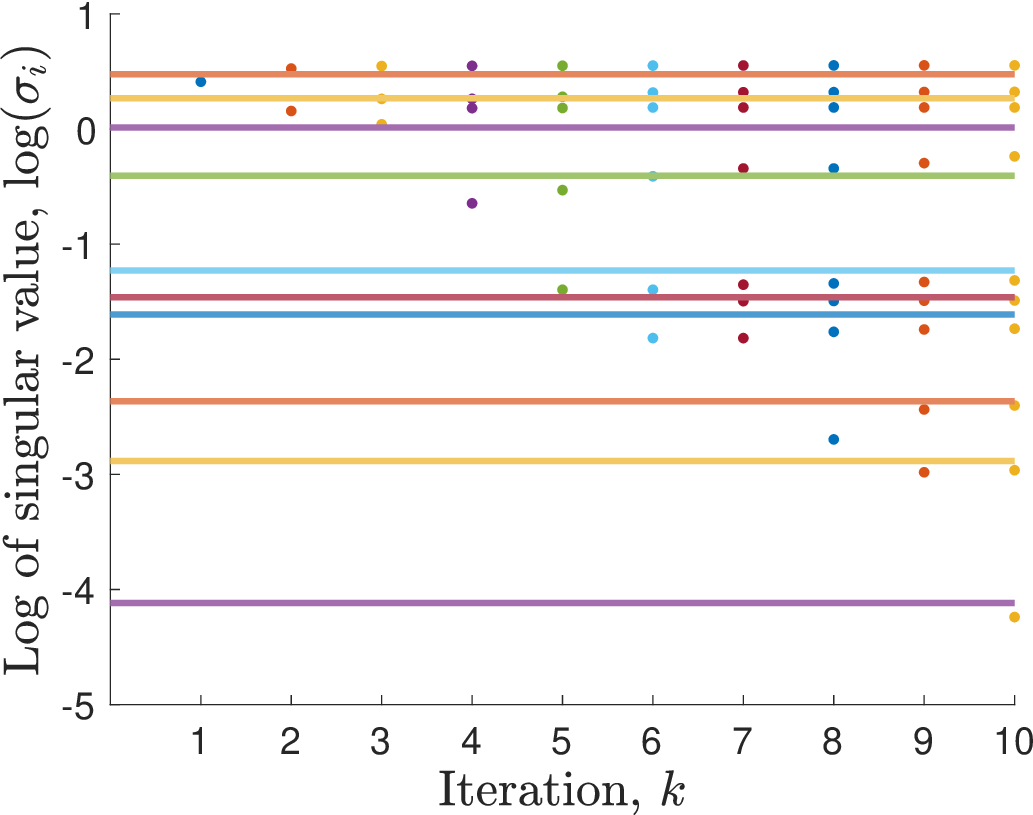} &
      \includegraphics[width=6cm]{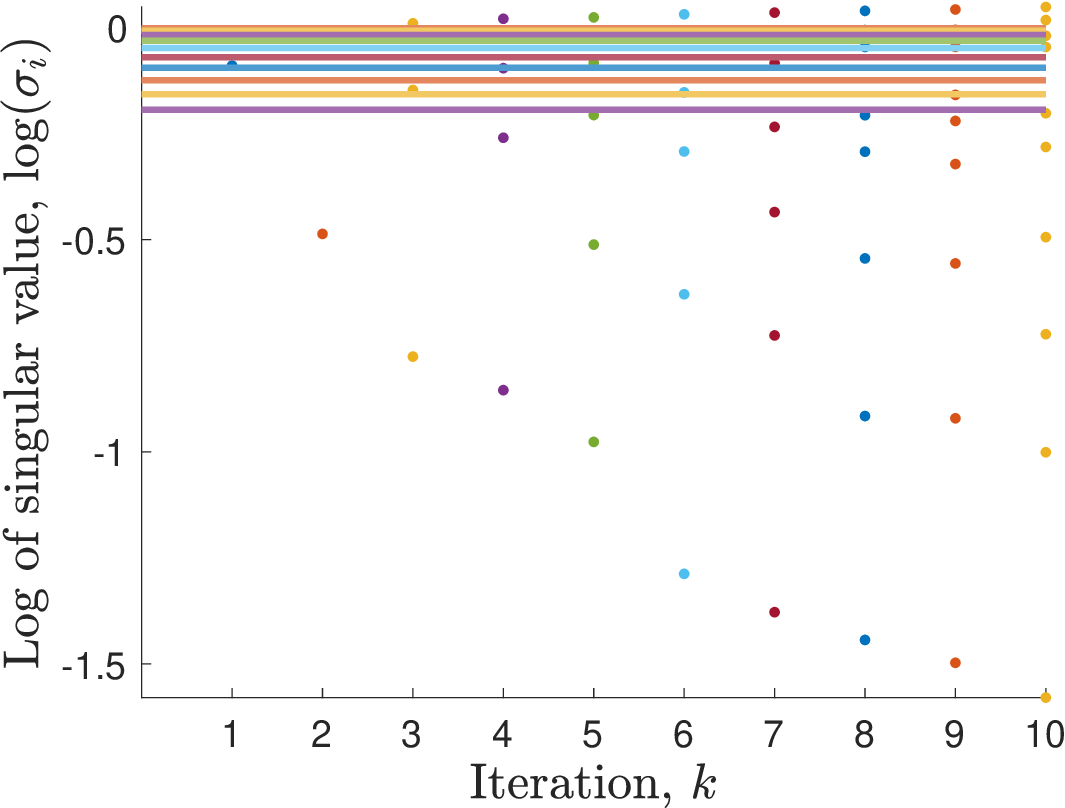}\\
\end{tabular} \\
\begin{tabular}{cc}
     \includegraphics[width=6cm]{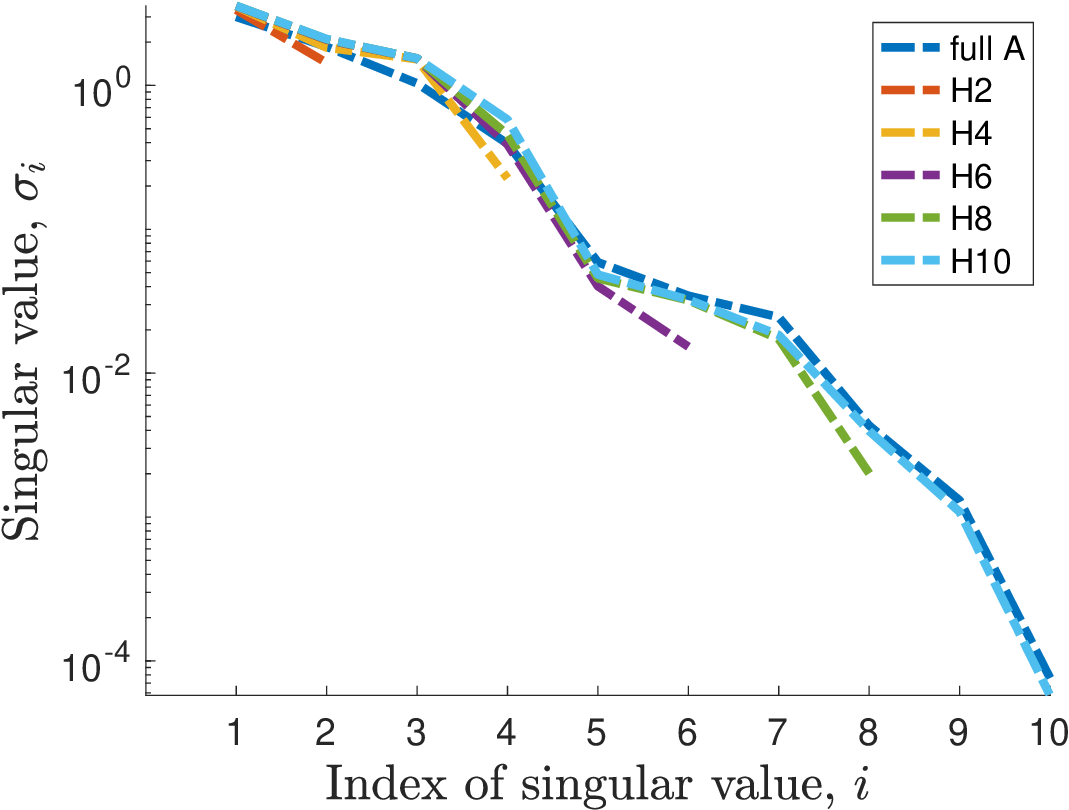} &
      \includegraphics[width=6cm]{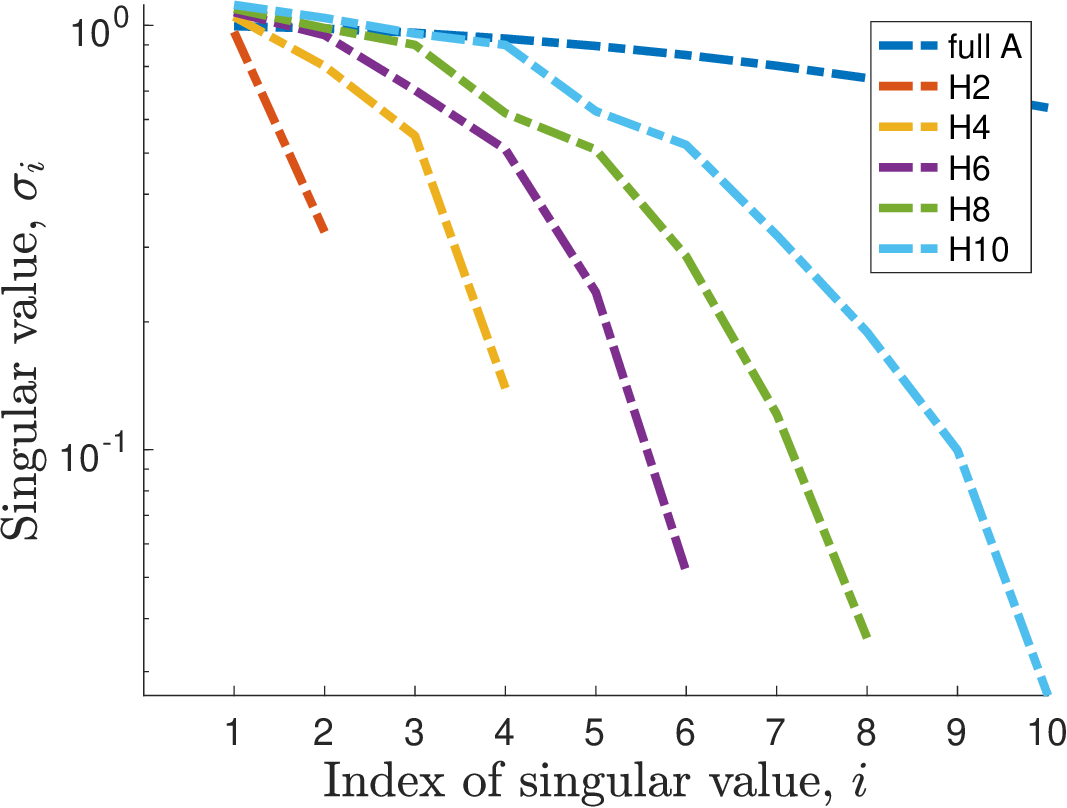}\\
\end{tabular}
\caption{Singular values for Shaw and Spectra.}
\label{fig:RegToolsSV2}
\end{figure}

From Figures \ref{fig:RegToolsSVD} and \ref{fig:RegToolsSV2}, it seems that the quality of the singular value approximations depends on the conditioning of $A$. To investigate this, we constructed the following experiment: consider the matrix $A \in \mathbb{R}^{n \times n}$ and the vector $x\true$ generated from Spectra test problem (\ref{eq:SpectraMatrix}). We compute the SVD of $A$ and keep the singular vectors from the orthogonal matrices $U$ and $V$ but replace the diagonal matrix containing the singular values of $A$ with a diagonal matrix $S$ whose diagonal elements are defined as: $$ S^{(i)}_{kk} = e^{c_i*k} \quad \mbox{for} \quad k = 1,...,n,$$ where $c_i$ are entries from the vector $c = [-2,-1,-0.5,-0.25]$.  For these different matrices $S^{(i)}$, we compare the largest singular values of $A^{(i)} = US^{(i)}V^T$ with those associated with the Hessenberg matrices obtained using the Hessenberg process associated to $A^{(i)}$ and right-hand side $b^{(i)} = A^{(i)}x\true$. The results are displayed in Figures \ref{fig:ContrivedSVD} and \ref{fig:ContrivedSVD2}. Indeed, it appears that the more ill-conditioned $A$ is, the better the singular values of the Hessenberg matrices associated to the Hessenberg process approximate the largest singular values of $A$. This result is coherent with what we observed for the previous experiments, for example in the test problems Shaw, in Figure \ref{fig:RegToolsSV2}, and Deriv2, in Figure \ref{fig:RegToolsSVD}. 

\begin{figure}[ht]
\begin{tabular}{cc}
    {\scriptsize Spectra at $c_i=-2$} &  {\scriptsize Spectra at $c_i=-1$} \\   
    \includegraphics[width=6cm]{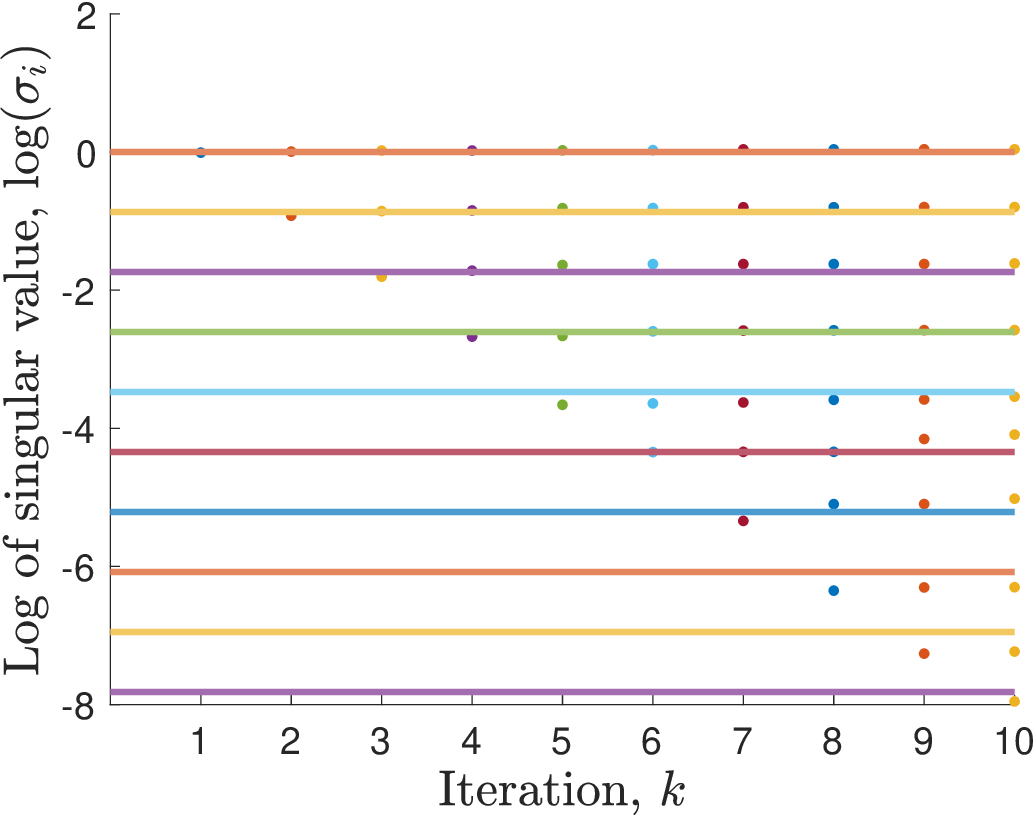} &  \includegraphics[width=6cm]{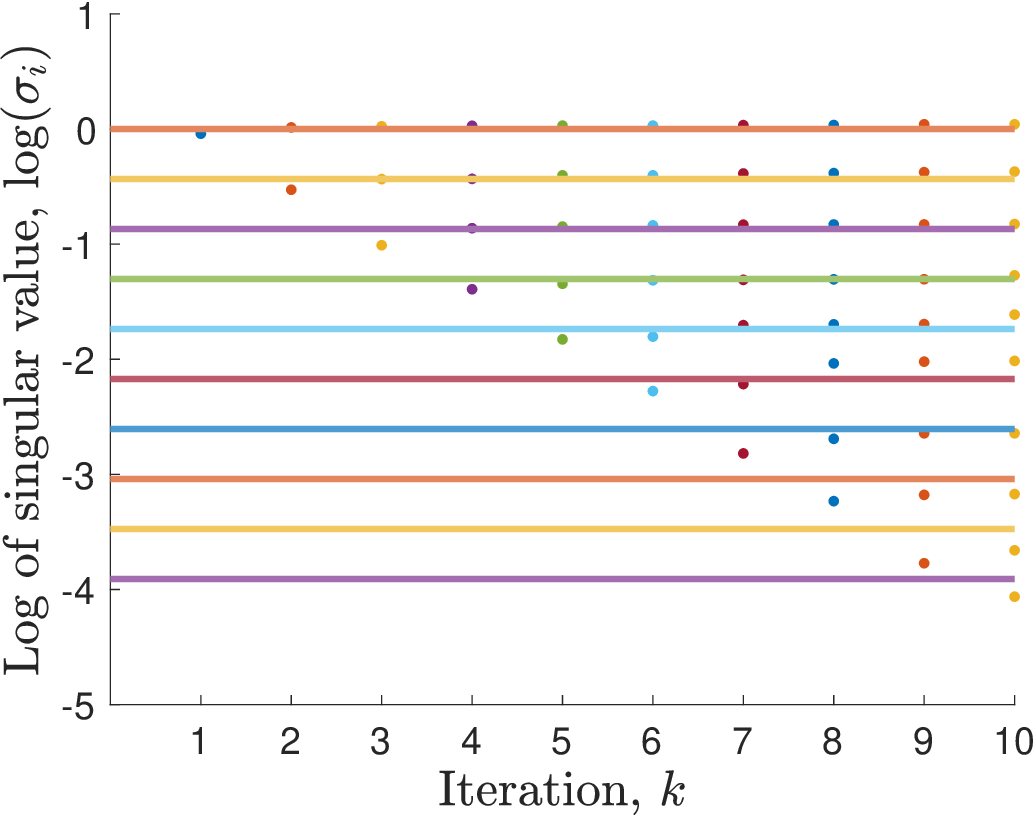} \\
      
\end{tabular} \\
\begin{tabular}{cc}
    \includegraphics[width=6cm]{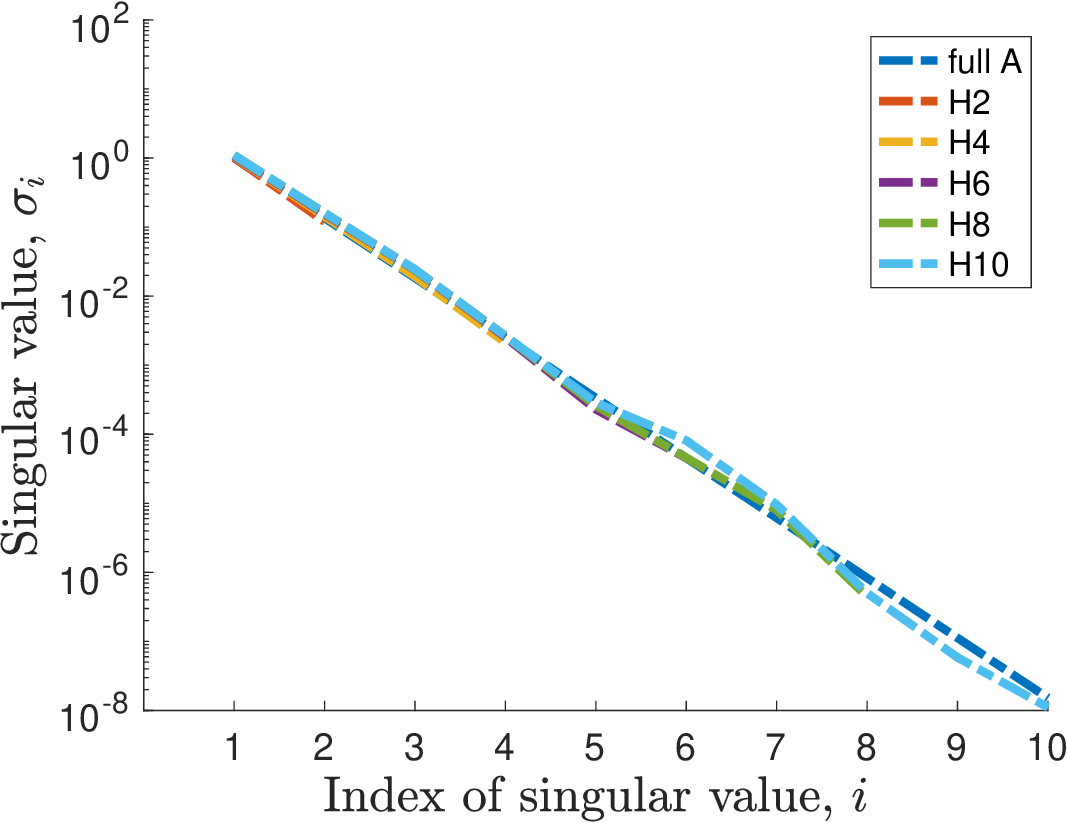} &  \includegraphics[width=6cm]{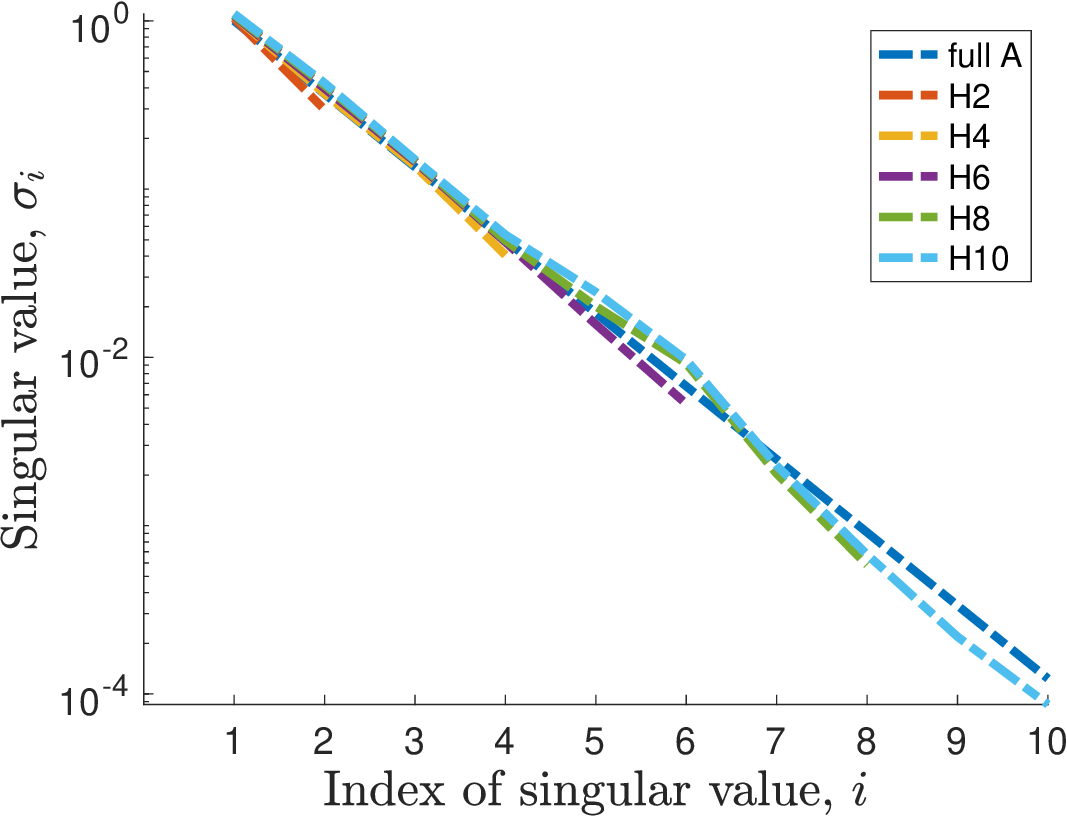} \\
\end{tabular}
\caption{Singular values for modified Spectra.}
\label{fig:ContrivedSVD}
\end{figure}

\begin{figure}[ht]
\begin{tabular}{cc}
    {\scriptsize Spectra at $c_i=-0.5$} &  {\scriptsize Spectra at $c_i=-0.25$} \\ 
    \includegraphics[width=6cm]{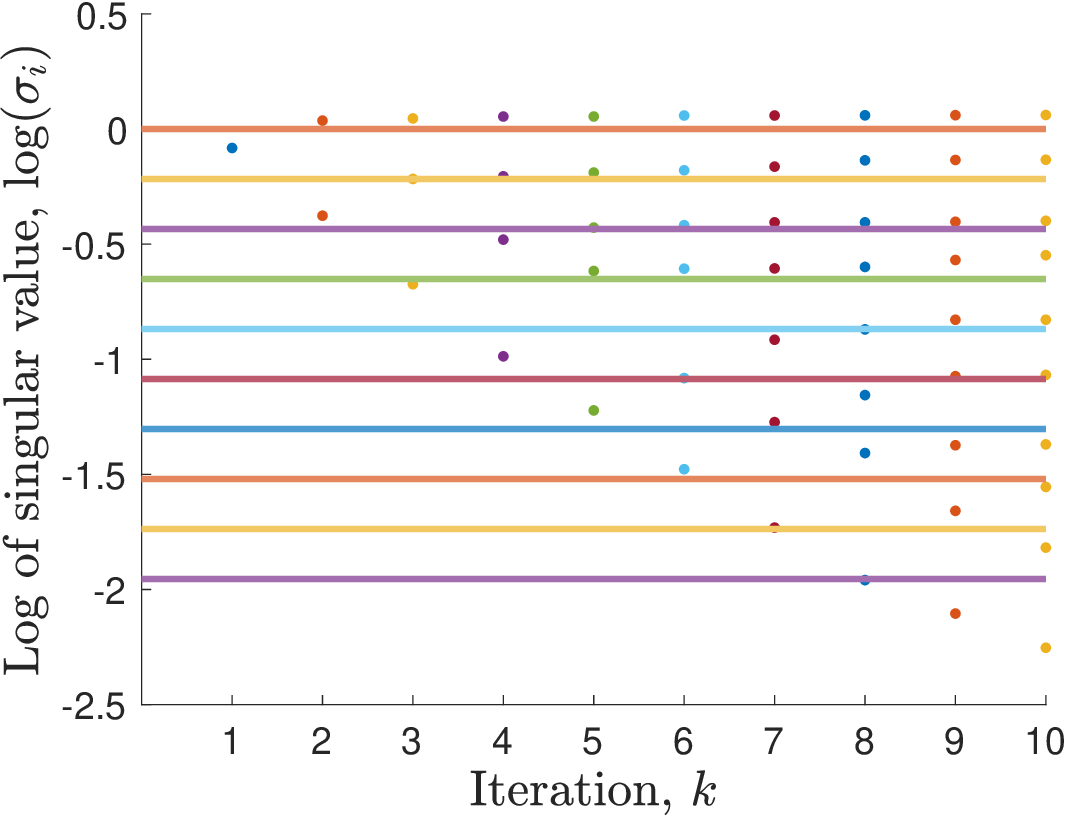} &  \includegraphics[width=6cm]{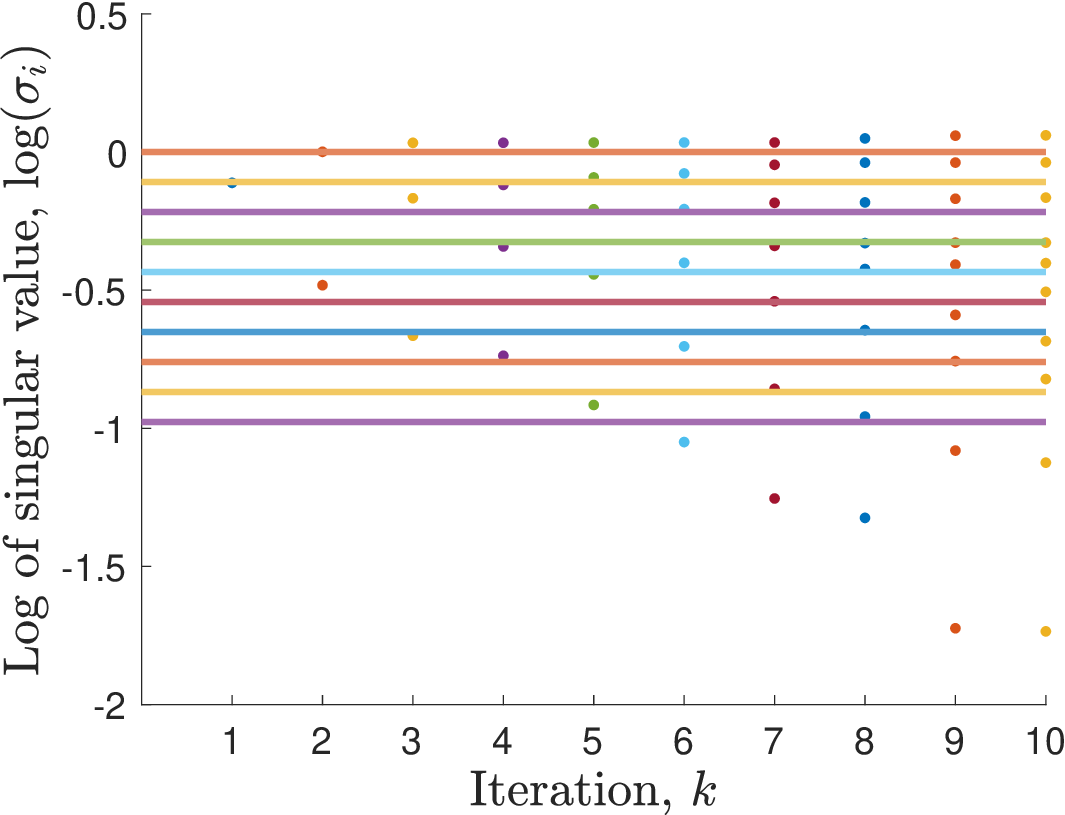} \\
\end{tabular} \\
\begin{tabular}{cc}
    \includegraphics[width=6cm]{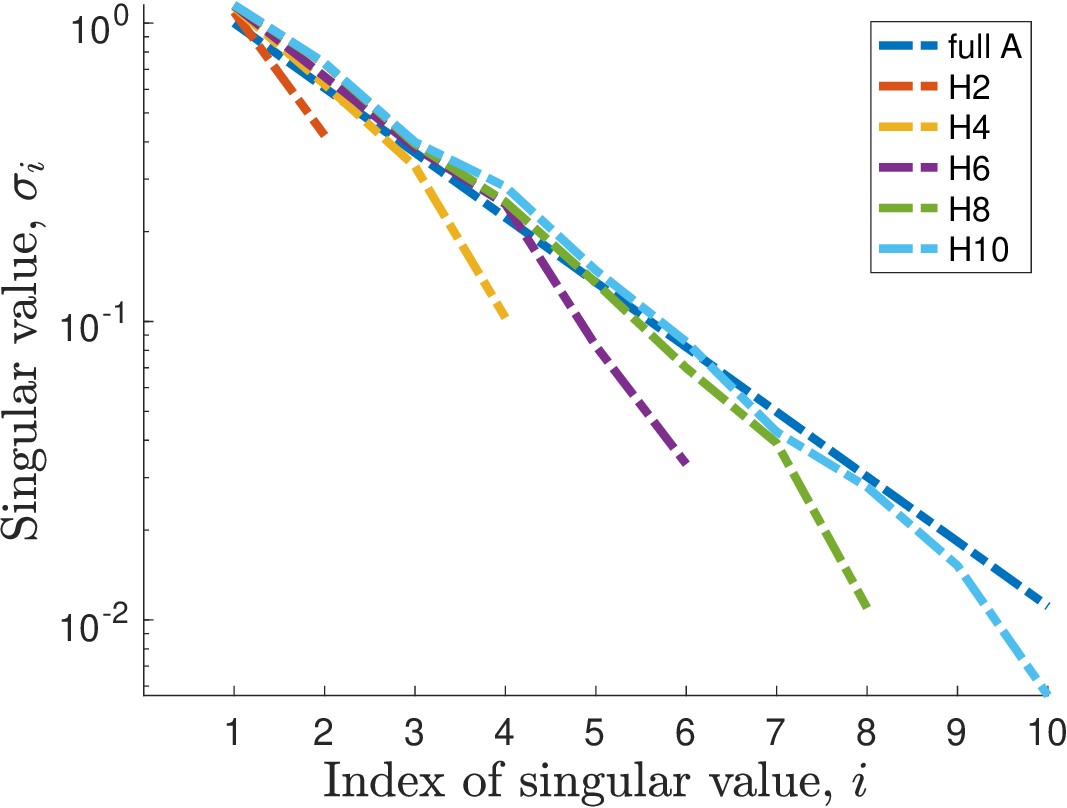} &  \includegraphics[width=6cm]{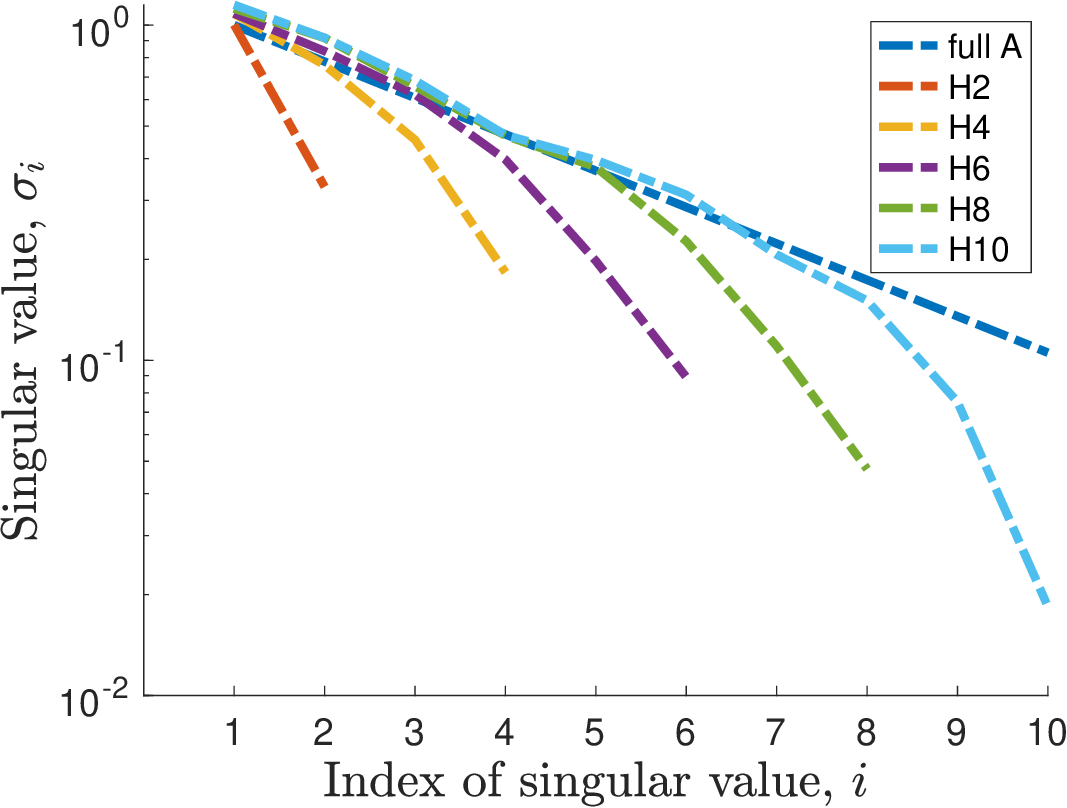} \\
\end{tabular}
\caption{Singular values for modified Spectra.}
\label{fig:ContrivedSVD2}
\end{figure}

\subsection{Spectral Filtering Properties}
An approximated solution to \eqref{eq:1} is said to be regularized using spectral filtering if it can be written in the form
\begin{equation}\label{eq:filter}
    x = \sum_{i=1}^{n} \Phi_i \frac{u_i^T b}{\sigma_i} v_i,
\end{equation}
where $u_i$ and $v_i$ correspond to left and right singular vectors of $A$, and $\sigma_i$ are its singular values in decreasing order of magnitude. Spectral filtering methods are a wide class of methods, including for example Tikhonov regularization, see, e.g. \cite{Hansen2010}. Moreover, note that  the truncated SVD corresponds to setting $\Phi_i=1$ for $i$ smaller than the truncation index and $\Phi_i=0$ otherwise.  There is also an empirical relation between Krylov methods and filtered solutions of the form \eqref{eq:filter}, see, e.g. \cite{Nagy2003SD}. 

A natural question arises when studying the regularizing properties of CMRH: is there an empirical relationship between the approximate solutions computed with CMRH and a solution with spectral filtering of the form in \eqref{eq:filter}? Using \eqref{eq:filter}, one can compute the empirical associated filter factors $\Phi_i$ to a given approximated solution at the $k$th iteration, $x_k$, as 
\begin{equation}\label{eq:phi}
    \Phi_i = \frac{v_i^T x_k }{u_i^T b}\sigma_i,
\end{equation}
assuming the SVD of $A$ is available. 

Consider the $1$D signal restoration problem Spectra solved using CMRH and GMRES. In Figure \ref{fig:FF_Enrm}, we observe the relative error norm \rev{histories} for three different noise levels. For each noise level, the plots in Figure \ref{fig:FF123} display the empirical filter factors for some iterations of CMRH and GMRES. These are \rev{indicated} 
with a marker in Figure \ref{fig:FF_Enrm}, \rev{and aim} to illustrate 
three iterations that are common for all noise levels and the last iteration before the relative error increases.

First, one can observe that for all combinations of noise levels and iterations displayed in the plots in Figure \ref{fig:FF123}, the empirical filter factors for CMRH and GMRES have a very similar behavior. This is also reminiscent of the filter factors corresponding to the TSVD, which \rev{present} 
a sharp phase transition between filter factors being 1 and 0. Also note that this phase transition moves to the right (that is, happens at a higher value of $k$) as the iterations increase: in other words, fewer iterations correspond to more filtering. This is a very positive result, as it indicates that CMRH is an effective method for filtering highly oscillatory noisy components if the iterations are stopped early. 

We also observe that the different noise levels (in this small regime) do not affect significantly the filter factors in the first iterations. We reiterate that in order for CMRH to mimic spectral filtering regularization, the iterative solver must terminate before the relative error increases.

\begin{figure}[ht]
\begin{tabular}{ccc}
    {\scriptsize Noise level 0.0001} &  {\scriptsize Noise level 0.001} & {\scriptsize Noise level 0.01} \\ 
    \includegraphics[width=3.85cm]{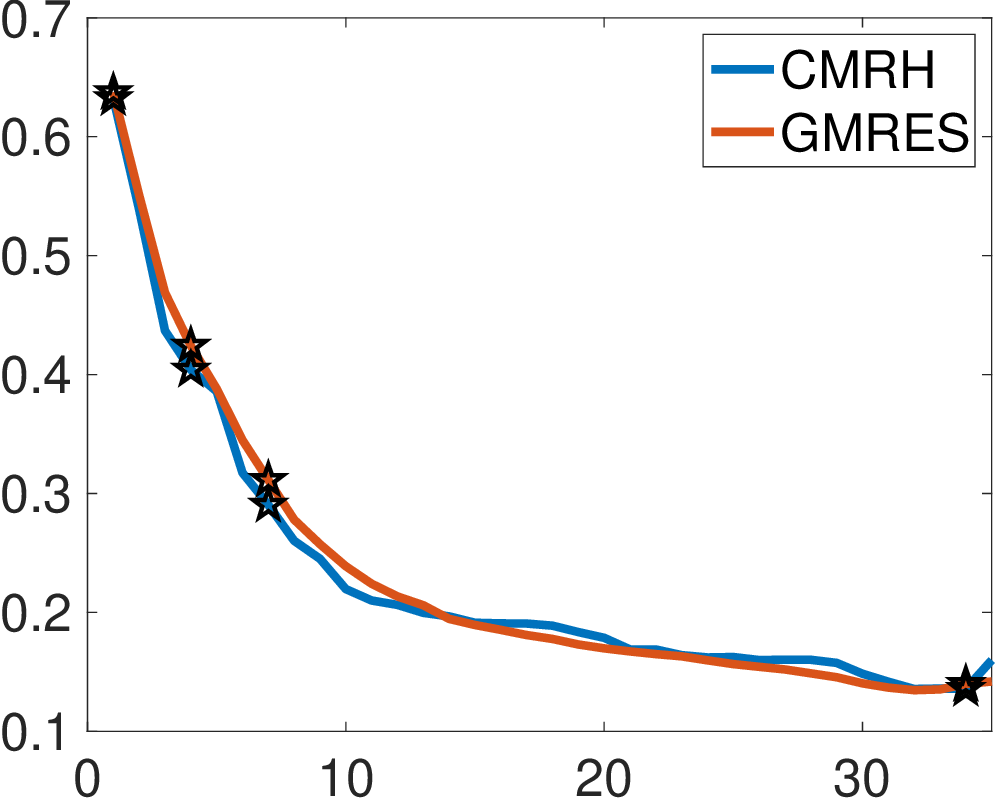} & 
    \includegraphics[width=3.85cm]{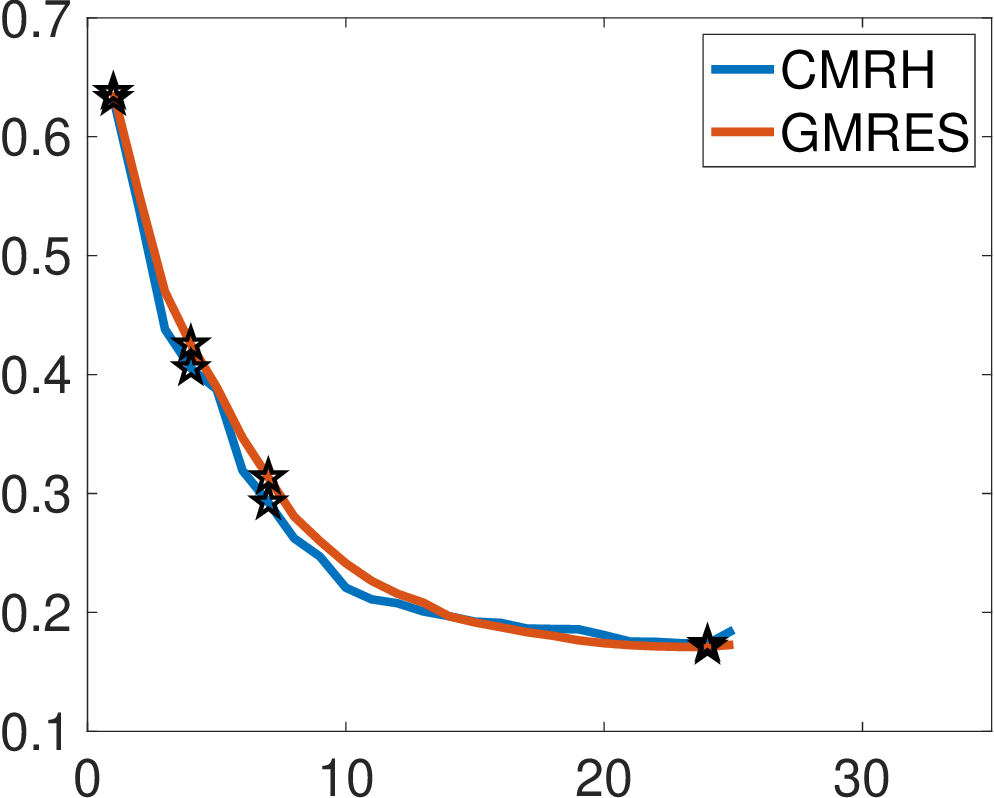} & \includegraphics[width=3.85cm]{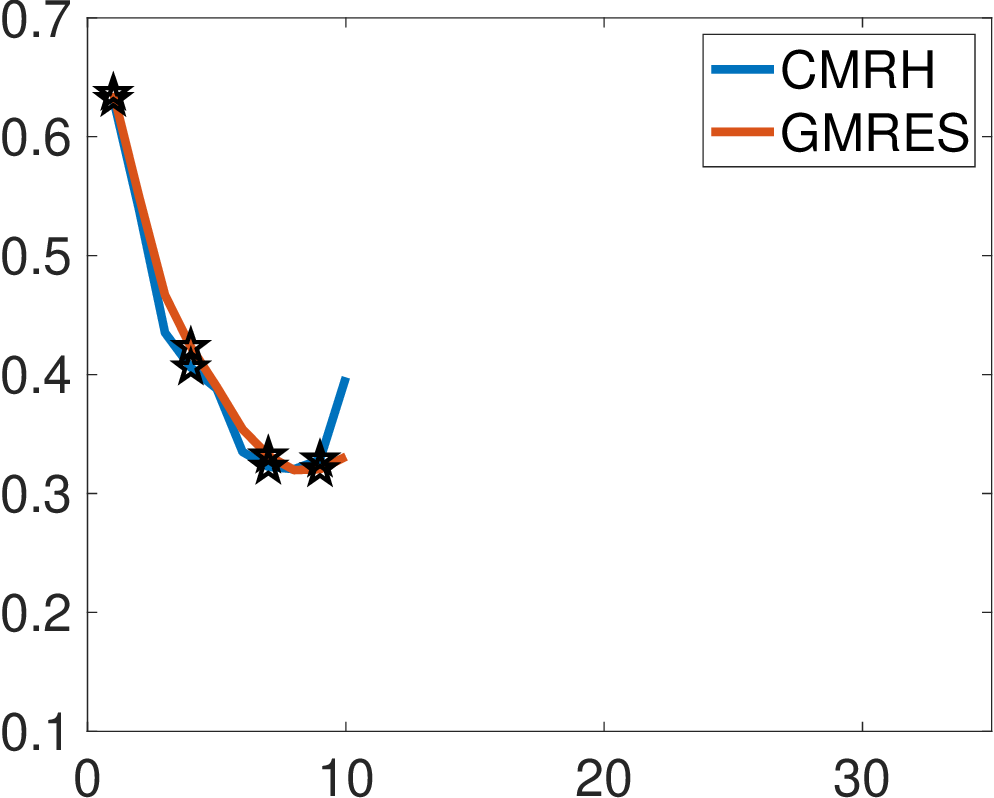} 
\end{tabular} 
    \caption{Relative error norms for the approximate solutions to test problem Spectra computed using CMRH and GMRES for different noise levels. The empirical filter factors for both methods at the indicated markers can be observed in the plots in Figure \ref{fig:FF123}.}
\label{fig:FF_Enrm}
\end{figure}
 and 
\begin{figure}[ht]
\begin{tabular}{cccc}
    {\scriptsize iteration $k$=1} &  {\scriptsize iteration $k$=4}  &{\scriptsize iteration $k$=7} &  {\scriptsize iteration $k$=9} \\ 
     \includegraphics[width=2.8cm]{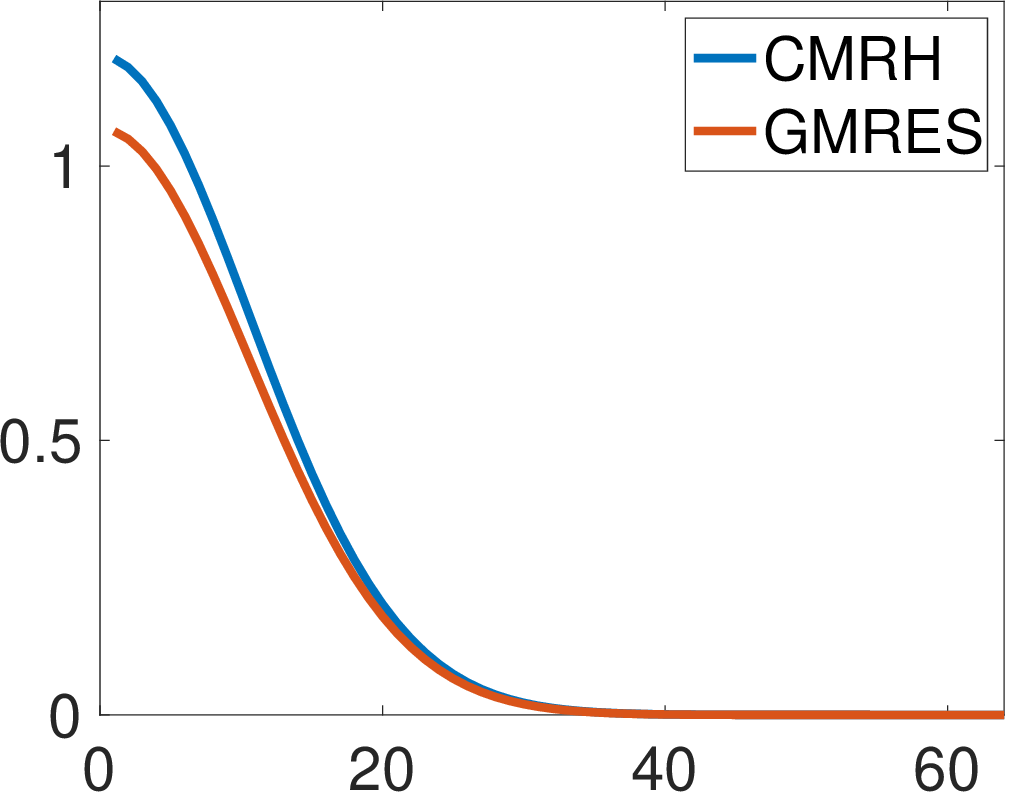} & 
    \includegraphics[width=2.8cm]{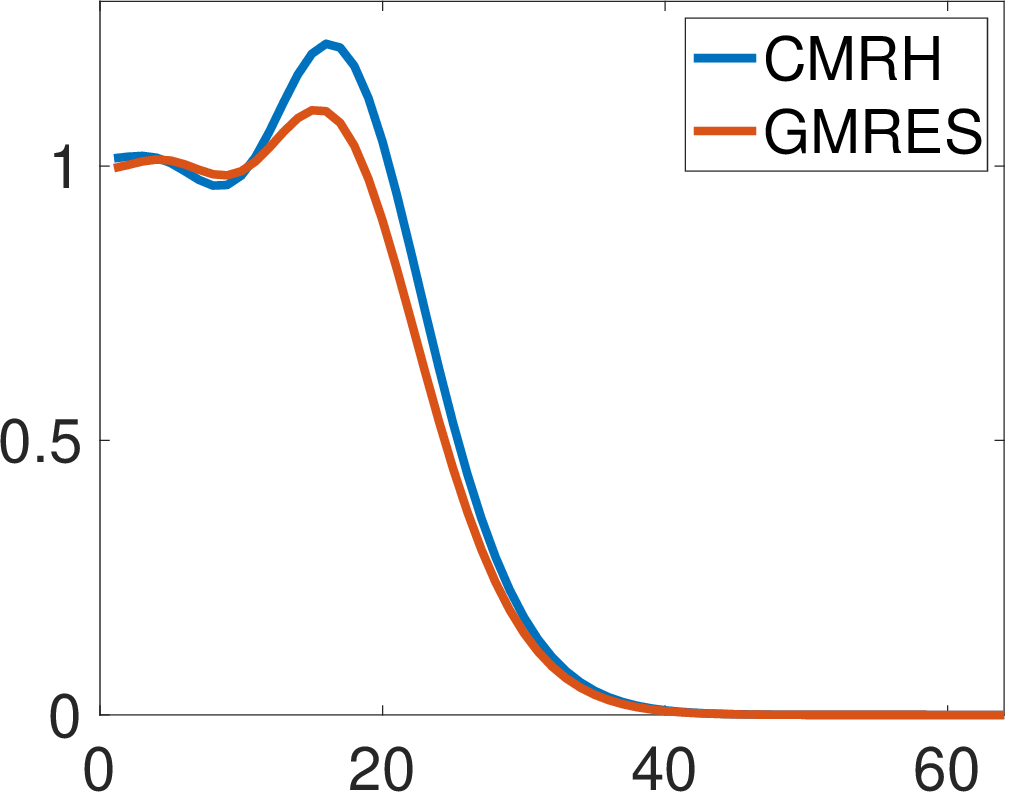} & \includegraphics[width=2.8cm]{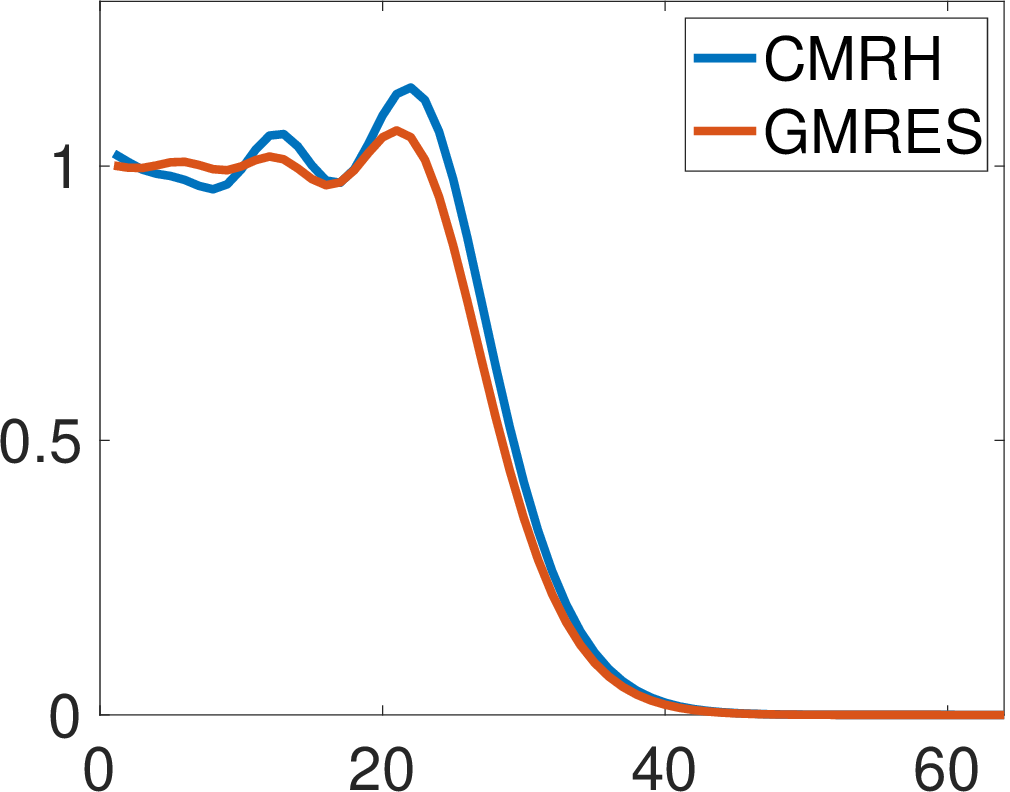} & \includegraphics[width=2.8cm]{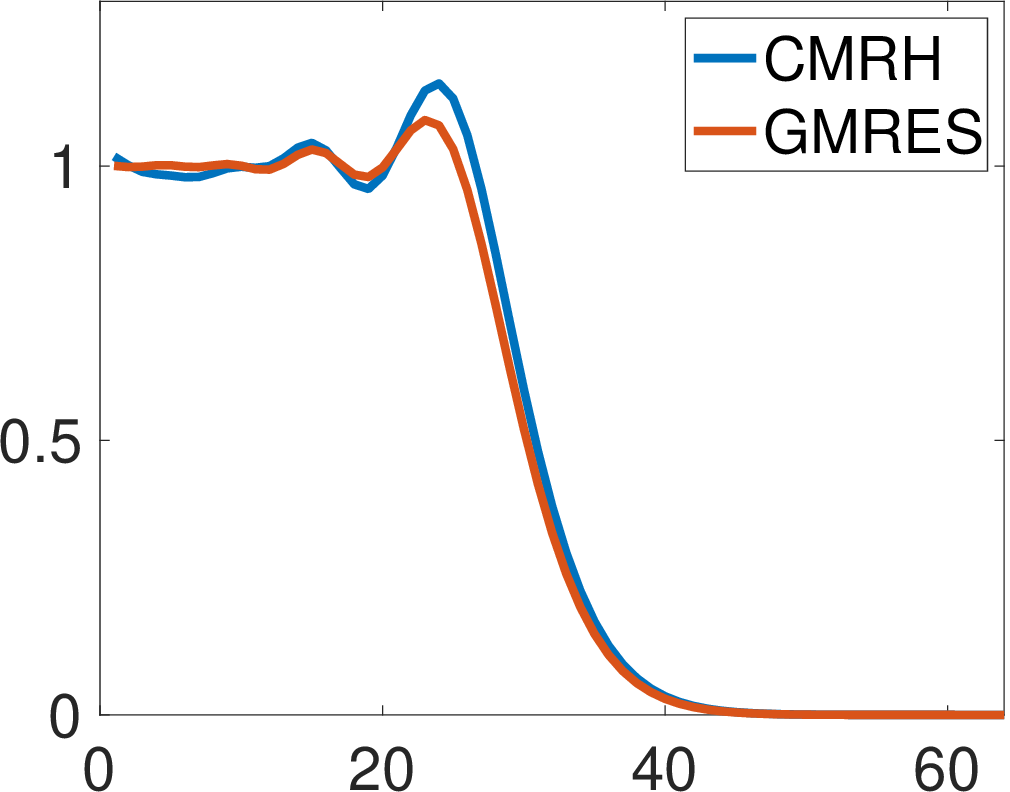} \\[12pt]

    {\scriptsize iteration $k$=1} &  {\scriptsize iteration $k$=4}  &{\scriptsize iteration $k$=7} &  {\scriptsize iteration $k$=24} \\ 
    \includegraphics[width=2.8cm]{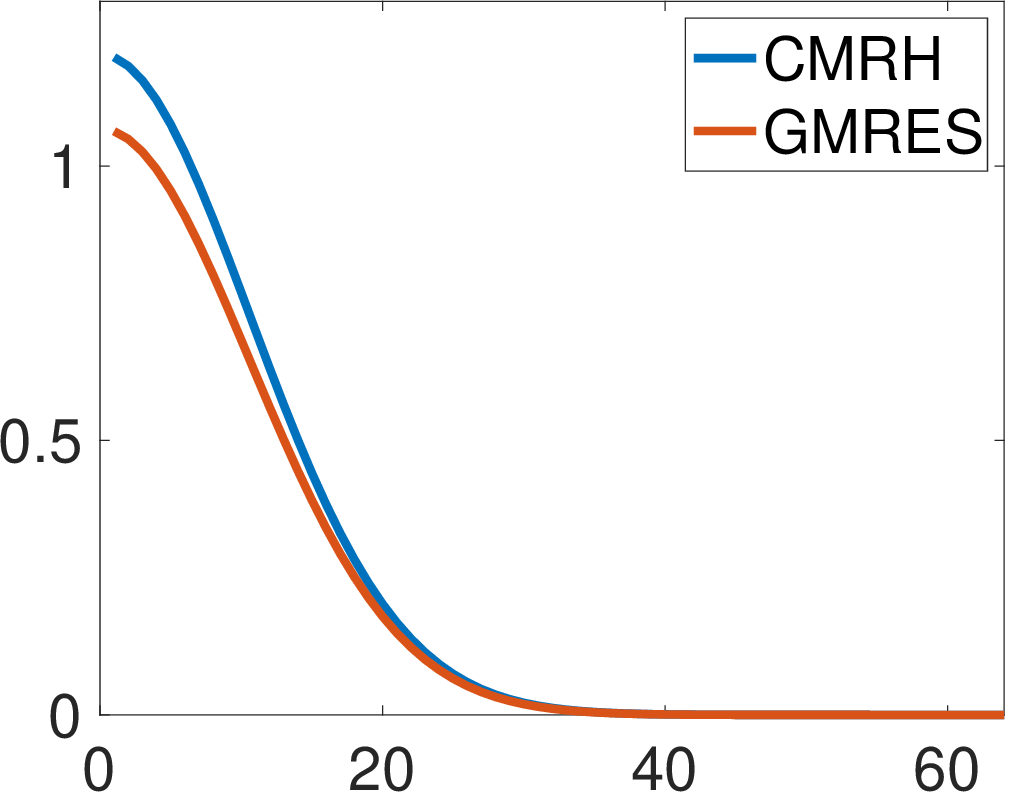} & 
    \includegraphics[width=2.8cm]{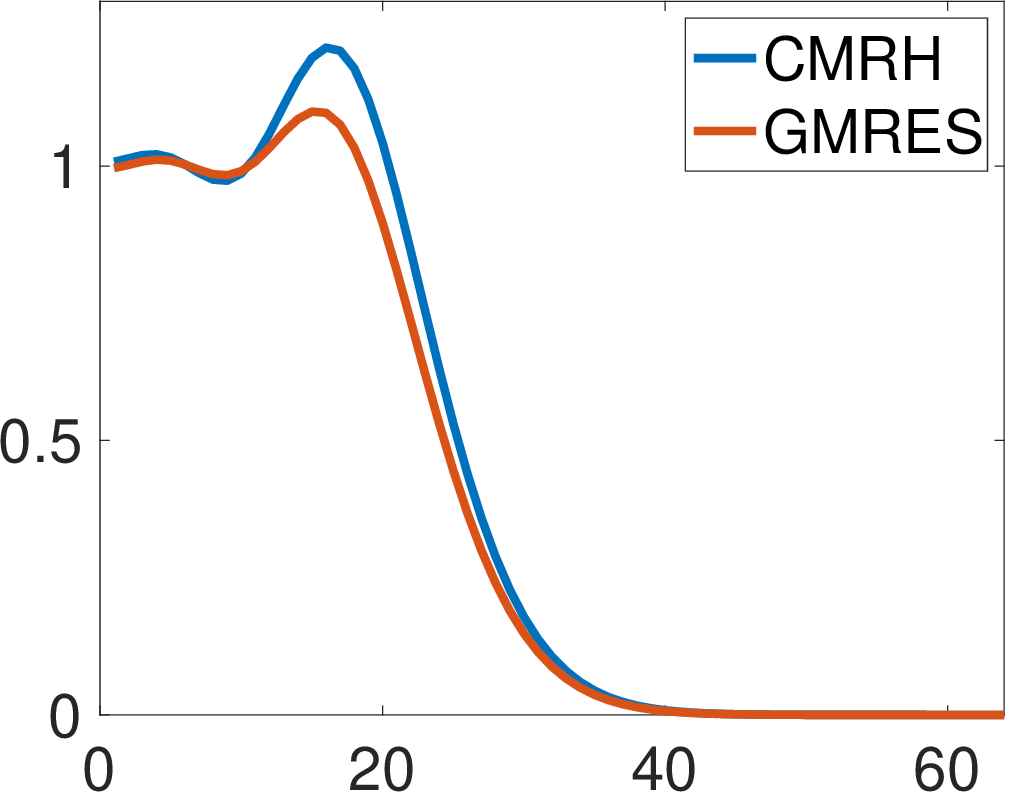} & \includegraphics[width=2.8cm]{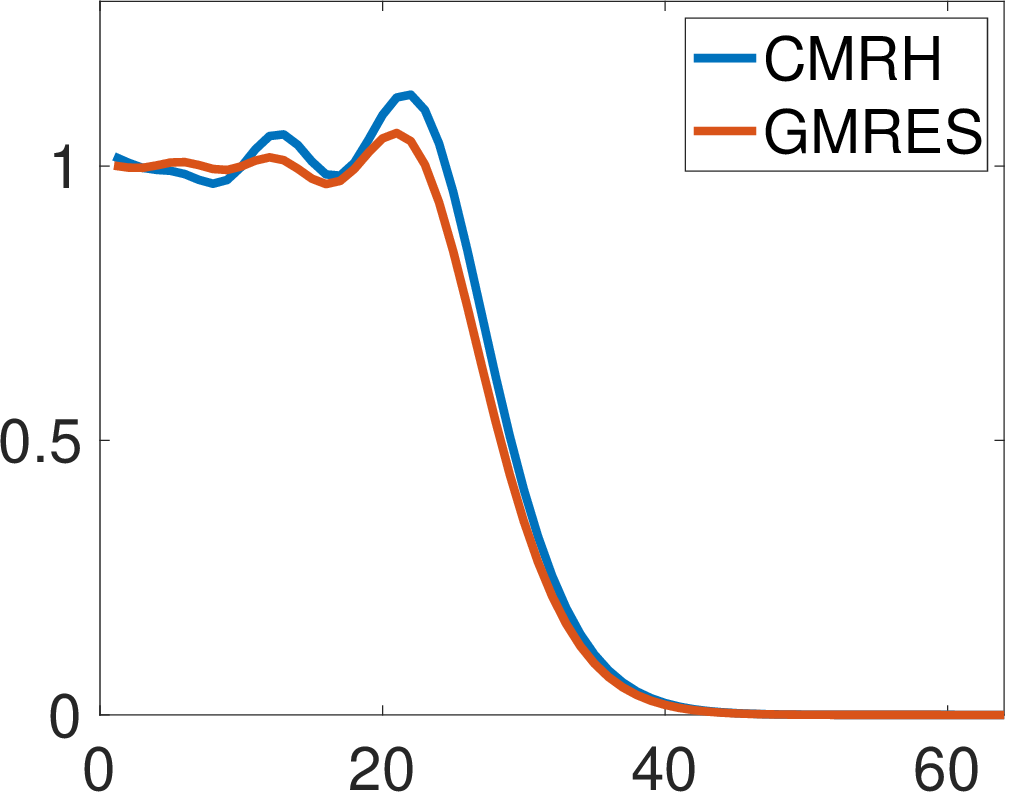} & \includegraphics[width=2.8cm]{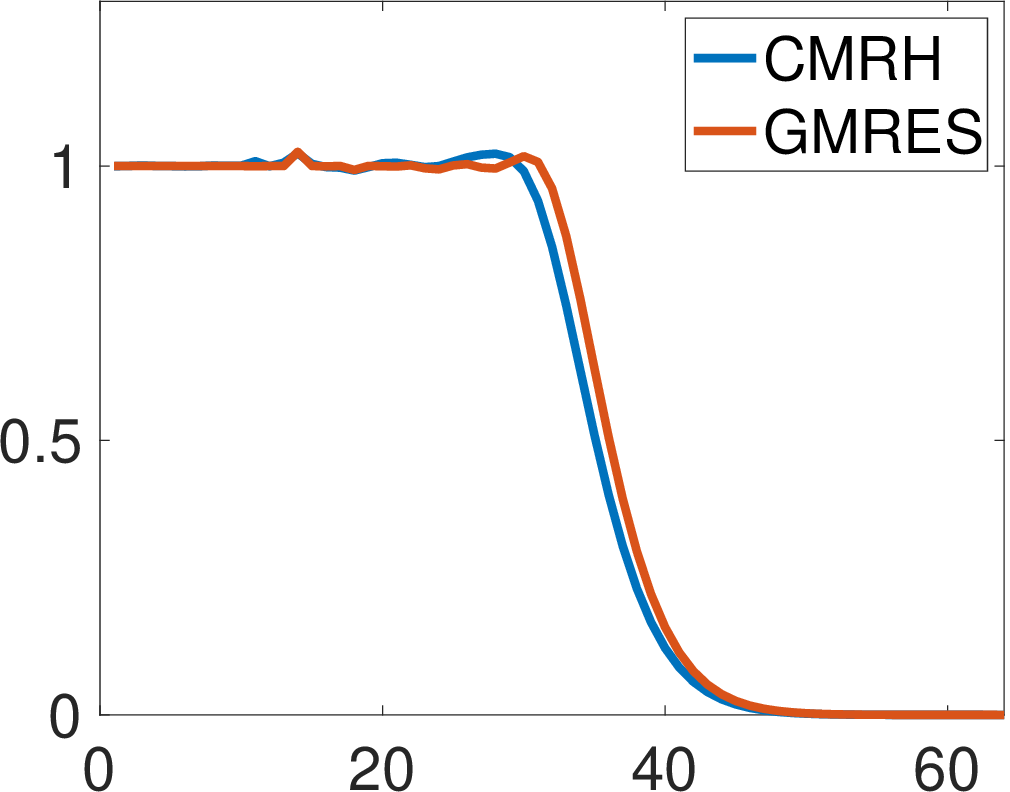} \\[12pt]

    {\scriptsize iteration $k$=1} &  {\scriptsize iteration $k$=4}  &{\scriptsize iteration $k$=7} &  {\scriptsize iteration $k$=34} \\ 
    \includegraphics[width=2.8cm]{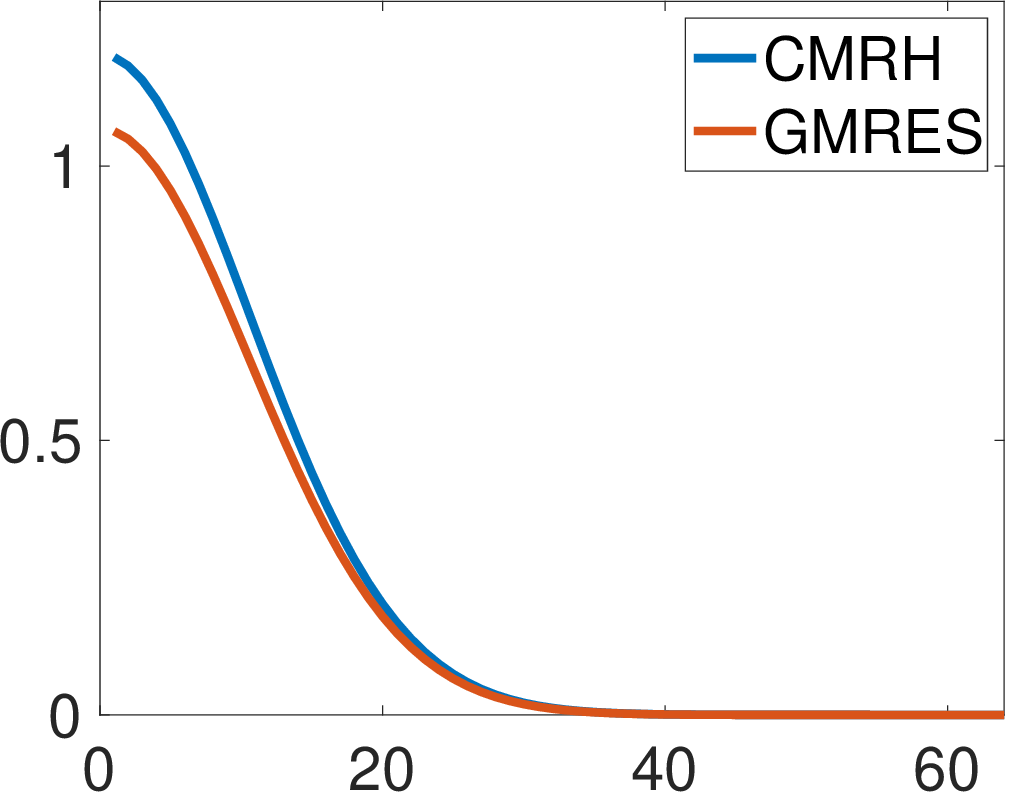} & 
    \includegraphics[width=2.8cm]{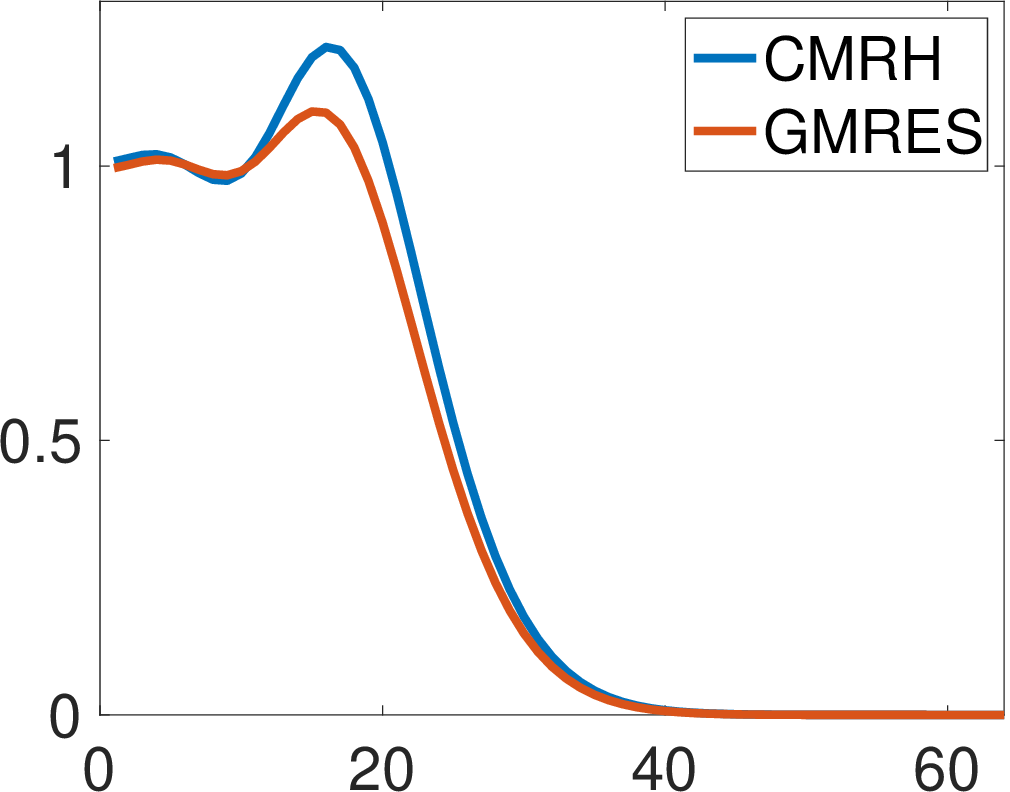} & \includegraphics[width=2.8cm]{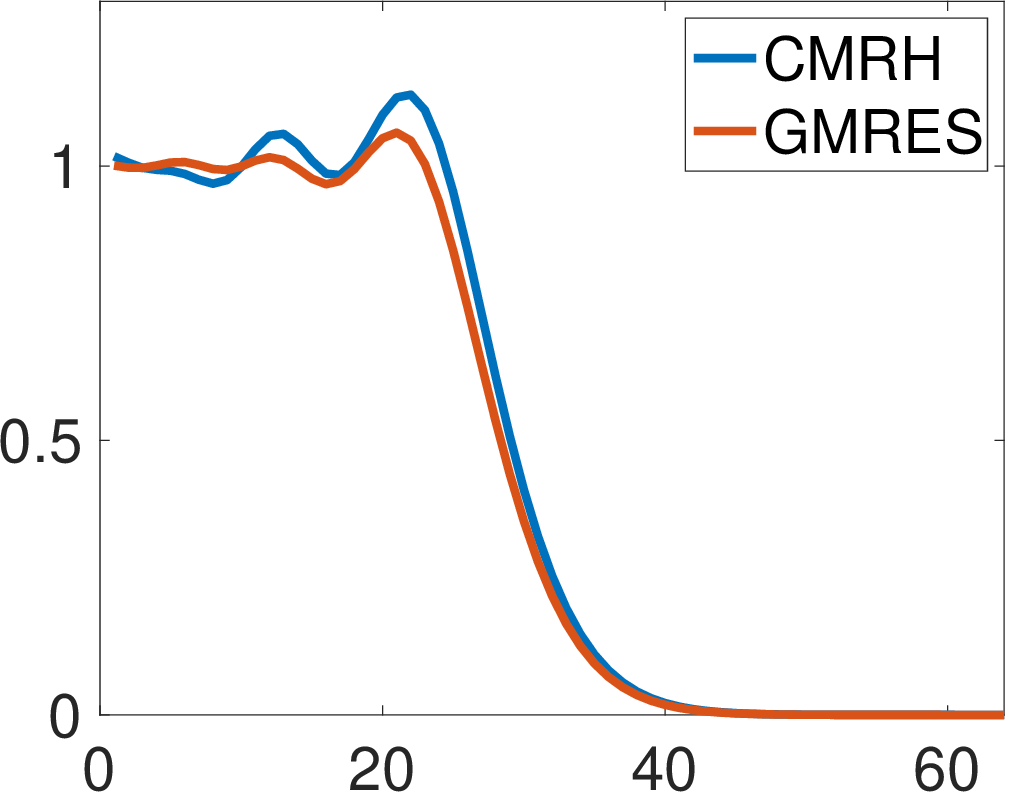} & \includegraphics[width=2.8cm]{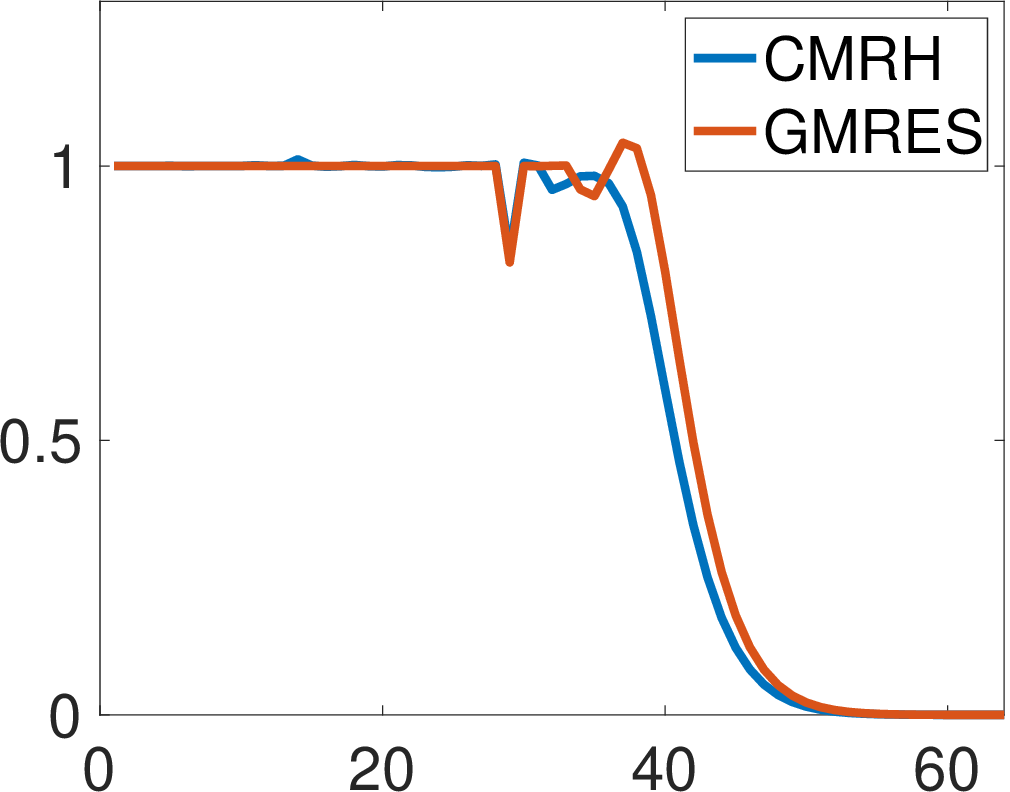} 
\end{tabular} 
\caption{
Empirical filter factors $\Phi_i$ for solutions of test problem Spectra computed using GMRES and CMRH at iterations corresponding to the markers in Figure \ref{fig:FF_Enrm}. 
The top row corresponds to left plot in Figure \ref{fig:FF_Enrm} (noise level 0.001), the middle row corresponds to the middle plot of Figure \ref{fig:FF_Enrm} (noise level 0.001), and the bottom row corresponds to the right plot in Figure \ref{fig:FF_Enrm} (noise level 0.01).
}
\label{fig:FF123}
\end{figure}
\section{Hybrid Changing Minimal Residual Hessenberg Method}\label{sec:H-CMRH}
Consider the following optimization problem with standard Tikhonov regularization: 
\begin{equation} \label{eq:17} 
\min_{x \in \mathbb{R}^n} \|b-Ax\|^{2} + \lambda^2 \|x \|^{2}. 
\end{equation}
At each iteration, we consider an approximate solution belonging to the Krylov subspace ${\cal{K}}_k$. Due to the lack of orthonormality of the basis vectors generated by the Hessenberg process, we solve the following least squares problem:
\begin{equation} \label{eq:18} 
\min_{x \in K_{k}} \| L^{\dagger}_{k+1} (b-Ax) \|^{2} + \lambda^2_{k} \| L^{\dagger}_{k} x \|^{2},
\end{equation}
which is an approximation of (\ref{eq:17}).
Similarly to CMRH, and using the Hessenberg relationship in (\ref{eq:7}), solving \eqref{eq:18} is equivalent to solving the following small subproblem after each iteration of the Hessenberg process:
\begin{equation} 
\label{eq:19} 
y_{\lambda , k} = \arg \min_{y \in \mathbb{R}^{k}} \| \beta e_{1} - H_{k+1,k}y \|^{2} + \lambda^2_{k} \| y \|^{2}, 
\end{equation}
where $\beta$ is the largest entry in $r_0$\rev{, 
assuming pivoting is used}, and then project\rev{ing} back to the original subspace using $x_k = x_0 + L_k y_{\lambda , k}$ as seen in Algorithm \ref{alg:arbitrarySample3}. 
Note that one of the main advantages of hybrid methods is that the regularization parameter $\lambda$ in \eqref{eq:17} can be adaptively chosen throughout the iterations: hence the subscript $\lambda_k$ in \eqref{eq:18}. In our case, this is due to the nature of the regularization term in \eqref{eq:17}, which implies that the projection onto the Krylov subspace is independent of the regularization parameter. Our goal is to provide an appropriate $\lambda_k$ at each iteration that will not cause the regularized solution $x_k$ to be overly smooth or oscillatory. 

There exist many regularization parameter choice criteria for the Tikhonov problem, see, e.g. \cite{Kilmer2001RegParam, Reichel2013Param, Bauer2011Param}, and more specifically for hybrid regularization, see, e.g. \cite{chung2024computational,Gazzola2020Krylov}. Moreover, since the projected problem \eqref{eq:19} is of a small dimension, SVD based approaches of the projected matrix can be implemented to find $\lambda_{k}$ very efficiently. In this paper, we analyze the results using the optimal regularization parameter with respect to the error norm 
and the generalized cross validation (GCV) method. These are explained in detail in Section \ref{sec:RegParam}. However, note that other regularization parameter criteria can be adapted seamlessly to this framework.

A full description of the pseudocode for H-CMRH with pivoting can be found in Algorithm \ref{alg:arbitrarySample3}.

\begin{algorithm}[H]
\caption{Hybrid CMRH (H-CMRH)} \label{alg:arbitrarySample3}
\begin{algorithmic}[1]
\State Compute $r_0 = b -Ax_0$
\State $p = [1,2,....,n]^T$
\State Determine $i_0$ such that $|r_0(i_0)| = \|r_0\|_{\infty}$
\State $\beta = r_0(i_0)$; $l_1 = r_0/ \beta$ ; $p_1 \Leftrightarrow p_{i_0}$
\For{$k = 1,....,m$}
\State $ u =Al_k$
\For{$j = 1,....,k$}
\State $h_{j,k} = u(p(j))$; $u = u - h_{j,k}l_j$
\EndFor
\If {$k<n$ and $ u \neq 0$}
\State Determine $i_0 \in \{ k+1,.......,n \}$ such that $|u(p(i_0))| = \|u(p(k+1:n))\|_{\infty}$
\State $h_{k+1,k} = u(p(i_0))$; $l_{k+1} = u/h_{k+1,k}$ ; $p_{k+1} \Leftrightarrow p_{i_0}$
\Else
\State $h_{k+1,k} = 0$; Stop.
\EndIf
\EndFor
\State Define the $(k+1) \times k$ Hessenberg matrix 
\State Implement regularization parameter scheme ($\lambda_k$): optimal or GCV
\State Compute the minimizer of $ \| \beta e_{1} - H_{k+1,k}y \|^{2} + \lambda^2_{k} \| y \|^{2}$ and $x_k = x_0 + L_k y_{\lambda,k}$
\end{algorithmic} 
\end{algorithm} 

\subsection{Residual Norms for Hybrid CMRH and Hybrid GMRES}
The difference between the residual norms of H-CMRH and hybrid GMRES can be bounded in a similar fashion to those of CMRH and GMRES. Since the validity of hybrid GMRES is well established, these bounds are crucial to understand the regularizing properties of H-CMRH. If we define 
\begin{equation}\label{eq:Lbar}
    \overline{L}_{k+1}= \begin{bmatrix} L_{k+1} & 0 \\ 0 & L_k  
    \end{bmatrix},
\end{equation}
with $L_{k},L_{k+1}$ defined by the Hessenberg relation \eqref{eq:7}, we find that if the condition number of $\overline{L}_{k+1}$  does not grow too quickly, then the residual norm associated to the solution obtained with H-CMRH is close \rev{to} the residual norm of the solution obtained with hybrid GMRES. 
\begin{theorem}
    Let $hr_k^G$ and $hr_k^C$ be the hybrid GMRES and hybrid CMRH residuals at the kth iteration beginning with the same initial residual $r_0$, respectively. Then \begin{equation} \label{eq:20} \|hr_k^G \| \leq \|hr_k^C \| \leq \kappa (\overline{L}_{k+1}) \|hr_k^G\|,\end{equation}
    where $\kappa (\overline{L}_{k+1}) = \|\overline{L}_{k+1}\| \|\overline{L}_{k+1}^{\dagger}\|$ is the condition number of $\overline{L}_{k+1}$.
\end{theorem}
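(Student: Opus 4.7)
The strategy is to mimic the earlier proof of the CMRH/GMRES residual bound, replacing $L_{k+1}$ by the block matrix $\overline{L}_{k+1}$ from \eqref{eq:Lbar}. First, I would interpret $hr_k^G$ and $hr_k^C$ as the augmented Tikhonov residuals
\[
\tilde r_k(x) := \begin{bmatrix} b-Ax \\ -\lambda_k x \end{bmatrix},
\]
evaluated at the hybrid GMRES iterate $x_k^G$ and the H-CMRH iterate $x_k^C$ (assuming the same regularization parameter $\lambda_k$ in both methods). With this interpretation, hybrid GMRES minimizes $\|\tilde r_k(x)\|^2$ over $x\in x_0+{\cal{K}}_k$ --- exactly the Tikhonov objective \eqref{eq:17} restricted to the Krylov subspace --- while, by \eqref{eq:18}, H-CMRH minimizes $\|\overline{L}_{k+1}^{\dagger}\tilde r_k(x)\|^2$ over the same subspace. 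The left inequality then follows immediately, because $x_k^C$ is a feasible point for the hybrid GMRES minimization.

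For the right inequality, I would build an analogue of the QR factorization \eqref{eq:14} for $\overline{L}_{k+1}$. Setting $\overline{Q}_{k+1}=\mathrm{diag}(Q_{k+1},Q_k)$ and $\overline{R}_{k+1}=\mathrm{diag}(R_{k+1},R_k)$ yields $\overline{L}_{k+1}=\overline{Q}_{k+1}\overline{R}_{k+1}$ with $\overline{Q}_{k+1}$ having orthonormal columns, so in particular $\|\overline{L}_{k+1}\|=\|\overline{R}_{k+1}\|$ and $\|\overline{L}_{k+1}^{\dagger}\|=\|\overline{R}_{k+1}^{-1}\|$. The Hessenberg relation \eqref{eq:7} combined with $x=x_0+L_k y$ shows that both $\tilde r_k(x_k^G)$ and $\tilde r_k(x_k^C)$ lie in $\mathrm{range}(\overline{L}_{k+1})$, so each can be written as $\overline{L}_{k+1}u^{\bullet}=\overline{Q}_{k+1}w^{\bullet}$ with $w^{\bullet}=\overline{R}_{k+1}u^{\bullet}$. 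Under this identification, $\overline{L}_{k+1}^{\dagger}\tilde r_k(x)=u$ (provided $\overline{L}_{k+1}$ has full column rank), so the H-CMRH optimization is exactly a minimization of $\|u\|$, yielding $\|u_k^C\|\leq\|u_k^G\|$.

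Combining the inequalities as in the CMRH proof gives
\[
\|hr_k^C\|=\|\overline{L}_{k+1}u_k^C\|\leq\|\overline{L}_{k+1}\|\|u_k^C\|\leq\|\overline{L}_{k+1}\|\|u_k^G\|=\|\overline{L}_{k+1}\|\|\overline{R}_{k+1}^{-1}w_k^G\|\leq\|\overline{L}_{k+1}\|\|\overline{R}_{k+1}^{-1}\|\|hr_k^G\|,
\]
and by the norm identities above the right-hand side equals $\kappa(\overline{L}_{k+1})\|hr_k^G\|$. The main obstacle I anticipate is the full-column-rank hypothesis on $L_k$ and $L_{k+1}$: this is what guarantees $\overline{L}_{k+1}^{\dagger}\overline{L}_{k+1}=I$ (so that the pseudoinverse in \eqref{eq:18} collapses cleanly to $\|u\|$) and the invertibility of $\overline{R}_{k+1}$. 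This assumption holds as long as the pivoted Hessenberg process has not broken down through iteration $k$, and it should be made explicit alongside the bound.
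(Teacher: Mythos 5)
Your proposal is correct and follows essentially the same route as the paper: both prove the left inequality from the feasibility of $x_k^C$ in the hybrid GMRES minimization, and the right inequality by writing $hr_k^C=\overline{L}_{k+1}u_k^C$, $hr_k^G=\overline{L}_{k+1}u_k^G$, invoking the H-CMRH optimality to get $\|u_k^C\|\leq\|u_k^G\|$, and then bounding with $\|\overline{L}_{k+1}\|$ and $\|\overline{L}_{k+1}^{\dagger}\|$. The only cosmetic difference is that you route the bound $\|u_k^G\|\leq\|\overline{L}_{k+1}^{\dagger}\|\,\|hr_k^G\|$ through an explicit block QR factorization of $\overline{L}_{k+1}$, which the paper skips by using the pseudoinverse norm directly; your explicit full-column-rank caveat is a reasonable addition that the paper leaves implicit.
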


\begin{proof}
    First, we prove the left inequality in (\ref{eq:20}).
    We can define the hybrid residual as a function of the solution:$$ hr(x) = \begin{bmatrix} b \\ 0
     \end{bmatrix}  -  \begin{bmatrix} A \\ \lambda I
     \end{bmatrix} x .$$ 
    Since $x_k^G$ and $x_k^C$ are in the Krylov subspace ${\cal{K}}_k$, by the optimality conditions of hybrid GMRES: 
    \begin{equation*}
       \|hr_k^G\| =\min_{x \in {\cal{K}}_k} \| hr(x) \| \leq \|hr(x_k^C)\| = \| hr_k^C \|. 
    \end{equation*}
    Hence $\|hr_k^G\| \leq \|hr_k^C\|$.

    Now we prove the right inequality in (\ref{eq:20}). Since for any $x \in {\cal{K}}_{k}$ we have that $Ax$$-$$b \in {\cal{K}}_{k+1}$$=$range($L_{k+1}$); and $x_k^G, x_k^C\in {\cal{K}}_{k}$= range($L_{k}$), we can write $hr_k^C$ and $hr_k^G$ as a linear combination of the columns of $\overline{L}_{k+1}$ defined in \eqref{eq:Lbar}. Let  $hr_k^C = \overline{L}_{k+1} u_k^C$ and $hr_k^G = \overline{L}_{k+1} u_k^G$. This implies that $u_k^C = \overline{L}_{k+1}^{\dagger} hr_k^C$ and $u_k^G = \overline{L}_{k+1}^{\dagger} hr_k^G$. Hence, $\| \overline{L}^{\dagger}_{k+1} hr_k^C \| = \| u_k^C \|$ and $\| \overline{L}^{\dagger}_{k+1} hr_k^G \| = \| u_k^G \|$.  By the optimality conditions of H-CMRH:
\begin{equation} \label{eq:pie2} \|u_k^C\| =  \min_{x \in {\cal{K}}_k}\left\| \begin{bmatrix} L^{\dagger}_{k+1} & 0 \\ 0 & L^{\dagger}_k  
    \end{bmatrix} \left( \begin{bmatrix}
        b \\ 0
    \end{bmatrix} - \begin{bmatrix}
        A \\ \lambda I
    \end{bmatrix} x \right) \right \| = \min_{x \in {\cal{K}}_k} \| \overline{L}^{\dagger}_{k+1} hr(x)\| .\end{equation}
Using (\ref{eq:pie2}) and the fact that $x_k^G$ is in ${\cal{K}}_{k}$ then $\|u_k^C\| \leq \|u_k^G\|$. Thus
$$ \|u_k^C \| \leq \| u_k^G \| = \| \overline{L}_{k+1}^{\dagger}hr_k^G \| \leq \| \overline{L}_{k+1}^{\dagger}\| \|hr_k^G\|. $$ On the other hand,
$$ \|hr_k^C \| = \|\overline{L}_{k+1}u_k^C \| \leq \| \overline{L}_{k+1}\| \|u_k^C\|.$$
Putting the above inequalities together gives the following relation:
\begin{eqnarray*}
  \| hr_k^C \| 
 &=& \| \overline{L}_{k+1} u_k^C \| \\
  &\leq&  \| \overline{L}_{k+1} \|  \|u_k^C\| \\
    &\leq&  \| \overline{L}_{k+1} \| \| \overline{L}^{\dagger}_{k+1}\| \|hr_k^G \| \\
    &=&\kappa (\overline{L}_{k+1}) \|hr_k^G\|\,,
 \end{eqnarray*}
so we conclude that $\|hr_k^G\| \leq \|hr_k^C\| \leq \kappa (\overline{L}_{k+1}) \|hr_k^G\|$
\end{proof}

\subsection{ Regularization Parameters}\label{sec:RegParam}
To validate the potential of the H-CMRH method independently of the parameter choice, we use the optimal regularization parameter with respect to the error norm. This means that we use the following minimizer:
\begin{equation} \label{eq:opt} 
\lambda_k = \arg\min_{\lambda} \| x_{\lambda , k} - x\true \|^{2}, 
\end{equation}
where $x_{\lambda , k}$ is the approximated solution at each iteration as a function of $\lambda$. Let $x_{\lambda, k} = x_0 + L_k y_{\lambda , k}$. Then the minimization in \eqref{eq:opt} can be written as
\begin{equation} \label{eq:21} 
\min_{\lambda} \| x_{\lambda , k} - x\true \|^{2} = \min_{\lambda} \| (x_0 + L_k y_{\lambda,k}) - x\true \|^{2}. 
\end{equation}
We use the normal equations of \eqref{eq:19} to find a closed form expression for $y_{\lambda,k}$, so that \eqref{eq:21} is equivalent to
\begin{equation} \label{eq:22} 
\min_{\lambda} \| (x_0 + (L_k(H_{k+1,k}^T H_{k+1,k} + \lambda ^2 I)^{-1} H_{k+1,k}^T \beta e_1)) - x\true\|^{2} 
\end{equation}
and using the SVD of $\displaystyle  H_{k+1,k} = U_k \Sigma_k V_k^T $ this can be further simplified to:
\begin{equation}\label{eq:23}
   \min_{\lambda} \| (x_0 + L_k((V_k \Sigma_k ^T \Sigma_k V_k^T + \lambda ^2 I)^{-1} V_k \Sigma_k ^{T} U_k^{T} \beta e_1) )- x\true\|^{2}.
\end{equation}
or, equivalently,
\begin{equation}\label{eq:24}
\min_{\lambda} \|x_{\lambda,k} - x\true\|^{2} = \min_{\lambda}  \| (x_0 + L_k V_k( \Sigma_k ^T \Sigma_k + \lambda^2 I)^{-1}  \Sigma_k ^{T} U_k^{T} \beta e_1) - x\true\|^{2}.
\end{equation}
This is, of course, not a reasonable regularization parameter choice criterion in practice, since it requires the knowledge of the true solution.

In a more realistic scenario, generalized cross validation (GCV) can be used to choose a suitable estimate for the regularization parameter $\lambda_k$ at each iteration. This scheme is a predictive statistics-based approach where prior estimates of the error norm are not needed \cite{chung2008weighted}. Here, it is assumed that if a portion of arbitrary data is missing, then the regularization parameter ($\lambda_k$) should be able to predict the missing information. In this paper, we use the GCV function applied to the projected matrix $H_{k+1,k}$ from (\ref{eq:7}). The chosen regularization parameter using the GCV criterion minimizes the prediction error through the minimization of the GCV function:
\begin{equation} \label{eq:25}   
G(\lambda) = \frac{ k \| (I - H_{k+1,k}H_{k+1,k, \lambda}^{\dagger})\beta e_{1}\|^{2}} {trace(I - H_{k+1,k}H_{k+1,k, \lambda}^{\dagger})^2},
\end{equation}
where $H_{k+1,k, \lambda}^{\dagger} = (H_{k+1,k}^{T} H_{k+1,k} +\lambda^2 I)^{-1} H_{k+1,k}^{T}.$
Using the SVD $\displaystyle  H_{k+1,k} = U_k \Sigma_k V_k^T $, (\ref{eq:25}) can be rewritten as:
\begin{equation} \label{eq:26}
 G(\lambda) = \displaystyle \frac{ k \beta^2 \left(\displaystyle \sum_{i=1}^{k} \left(\frac{\lambda^2}{\sigma_i^2 + \lambda^2}[U^T_k e_1]_i\right)^2 + \left(\left[U^T_k e_1\right]_{k+1}\right)^2\right)}{\left(1+ \displaystyle \sum_{i=1}^k \frac{\lambda^2}{\sigma_i^2 + \lambda^2}\right)^2},
\end{equation}
and 
$\displaystyle
    \lambda_k = \arg \min_{\lambda}  G(\lambda)\,.
$

\subsubsection{Stopping Criterion}
We must determine a suitable criterion to stop the iterations of H-CMRH. Similar to \cite{chung2008weighted}, we would like to find a stopping iteration $k$ that minimizes:
\begin{equation} \label{eq:27} 
\displaystyle \frac{n \| (I - AA_{k}^{\dagger})b\|^2}{(trace((I-AA_k^{\dagger}))^2},
\end{equation}

where $A_{k}^{\dagger}$ is defined considering the approximate solution produced by hybrid CMRH. \rev{Assuming $x_0=0$, without loss of generality, this can be written as: } 
$$x_k = L_k y_{\lambda,k} = L_k(H^T_{k+1,k}H_{k+1,k} + \lambda ^2 I)^{-1} H_{k+1,k}^T L_{k+1}^{\dagger}b \equiv A^{\dagger}_kb.$$ 
Since $L_k$ does not have orthonormal columns, it is not straightforward to compute (\ref{eq:27}) exactly. However, it can be approximated using a formal parallelism with the correspondent derivations for other Krylov methods, see, e.g. \cite{chung2008weighted}. This involves
\begin{eqnarray*}
    n \|(I - AA^{\dagger}_k) b\|^2 &\approx& n \|L_{k+1}^{\dagger}(I - AA^{\dagger}_k) b\|^2\\
    &=& n \|(I-H_{k+1,k}(H^T_{k+1,k}H_{k+1,k} + \lambda_{k}^2 I)^{-1}H_{k+1,k}^T)L_{k+1}^{\dagger} b\|^2.
\end{eqnarray*}
where $L_{k+1}^{\dagger}AL_{k}=L_{k+1}^{\dagger}L_{k+1}H_{k+1,k}=H_{k+1,k}$ and $L_{k+1}^{\dagger} I b = \beta e_1$.
Using the SVD of $H_{k+1,k}$, the previous expression can be rewritten as:  
\begin{equation}
n \|{L_{k+1}^{\dagger}}(I - AA^{\dagger}_k) b\|^2 = 
n \beta ^2 \displaystyle\left(\sum_{i=1}^k \left(\frac{\lambda_k^2}{\sigma_i^2 + \lambda_k^2}[U_k^T e_1]_i\right)^2 + \left([U_k^Te_1]_{k+1}\right)^2\right). 
\end{equation}
The denominator of (\ref{eq:27}) can be similarly approximated by: 
\begin{equation}
\left(trace(I - AA_k^{\dagger})\right)^2 \approx \left((n-k) + \displaystyle \sum_{i=1}^k \frac{\lambda_k^2}{\sigma_i ^2 + \lambda_k ^2}\right)^2.
\end{equation}
Thus, (\ref{eq:27}) can be approximated by the following:
\begin{equation} 
\label{eq:28} 
\hat{G}(k) = \displaystyle \frac{ n \beta ^2 \displaystyle\left(\sum_{i=1}^k \left(\frac{\lambda_k^2}{\sigma_i^2 + \lambda_k^2}[U_k^T e_1]_i\right)^2 + \left([U_k^Te_1]_{k+1}\right)^2\right)}{ \left((n-k) + \displaystyle \sum_{i=1}^k \frac{\lambda_k^2}{\sigma_i ^2 + \lambda_k ^2}\right)^2}\,. 
\end{equation} 
$\hat{G}(k)$ is used to determine the stopping iteration. The 
 algorithm is terminated when the difference between values is very small: 
\begin{equation} \label{eq:tol}
\displaystyle{
\left|
\frac{\hat{G}(k+1) - \hat{G}(k)}
{\hat{G}(1)}
\right| < tol,
}
\end{equation}
\rev{or when the minimum of $\hat{G}(k)$ continues to increase within a certain window of iterations.}

\section{Numerical Results}\label{sec:numerics} \rev{Three numerical experiments are presented in this section to demonstrate the effectiveness of CMRH and H-CMRH in the context of ill-posed problems. First, we show a comparison between CMRH and other inner product free methods for system matrices $A$ with different properties. Next, we showcase its potential benefits with respect to other traditional methods in the simulated context of low precision arithmetic. Finally, we add a comparison between H-CMRH and hybrid GMRES for different image deblurring examples with  multiple noise levels, and a seismic tomography simulated example to illustrate its applicability to large-scale problems.
In all examples, white Gaussian noise was added to the measurements and we define the corresponding noise level as
$$
\texttt{nl}= \frac{\|e\|}{\|Ax\true\|}.
$$
Computations for all numerical experiments and illustrations described in this paper were done using MATLAB on an M-series MacBook Pro.}
\rev{\subsection{Comparison with other inner product free methods} 
In this section, we present a comparison between CMRH and other inner product free regularization methods:
Landweber, 
Richardson (first-order) 
and Chebychev 
(suitable for any non-singular symmetric or non-symmetric linear system). 
Note that in the case of very ill-posed problems, the latter is numerically equivalent to the second-order Richardson method \cite[7.2.5.]{Bjorck1996Numerical}, which uses a fixed step length. This is due to the fact that the fraction between the summation and the subtraction of the upper and lower bounds of the  eigenvalues is very close to 1 for ill-conditioned problems containing the system matrix of the normal equations.}

\rev{In the following experiments, we present the relative error norm histories that correspond to three examples with different spectral properties. First, we reintroduce test problem Spectra, where $A \in \R^{64 \times 64}$ is an SPD matrix and the smallest eigenvalues are numerically zero, making it very ill-conditioned. Next, we present an example where the system matrix is diagonally dominant, ill-conditioned, and tridiagonal. This is known as a Dorr matrix and it is from the MATLAB gallery. Here, $A \in \R^{256 \times 256}$ is a non symmetric matrix with real positive eigenvalues. The chosen solution corresponds to the previously introduced Heat example from the Regularization Tools Package~\cite{reg}.  Finally, we examine a symmetric matrix with negative eigenvalues. This matrix, where  $A \in \R^{256 \times 256}$, violates the theoretical convergence conditions of Landweber, Richardson, and Chebyshev. The example corresponds to Deriv2 and it is from the Regularization Tools Package. In all scenarios, white Gaussian noise of appropriate level (e.g. $\texttt{nl}= 0.5\%$, $ \texttt{nl}=50\%$ and $\texttt{nl}=0.1\%$) has been added to the measurements to achieve challenging but resolvable problems. For all comparison methods, bounds on either the eigenvalues of $A$ (for symmetric matrices) or $A^T A$ are required.}

\rev{Assume $s_1$ and $s_n$ are the smallest and largest eigenvalues of $A$. We select the bounds for the Chebychev method to be $\min(s_1^2, s_n^2)$ and  $1.001\min(s_1^2, s_n^2)$, which follows the implementation in \cite[7.2.5.]{Bjorck1996Numerical}. Similar to \cite[7.7.2.]{Bjorck1996Numerical}, the step length for Landweber is $0.99/\max(s_1^2, s_n^2)$. Lastly, we select the step length for Richardson to be $0.99/(s_1+s_2)$, which follows the implementation in \cite[7.2.3.]{Bjorck1996Numerical}}

\begin{figure}[ht]
\begin{tabular}{ccc}
    {\scriptsize Spectra} &  {\scriptsize Dorr} &  {\scriptsize Deriv2}\\ 
   \includegraphics[width=3.85cm]{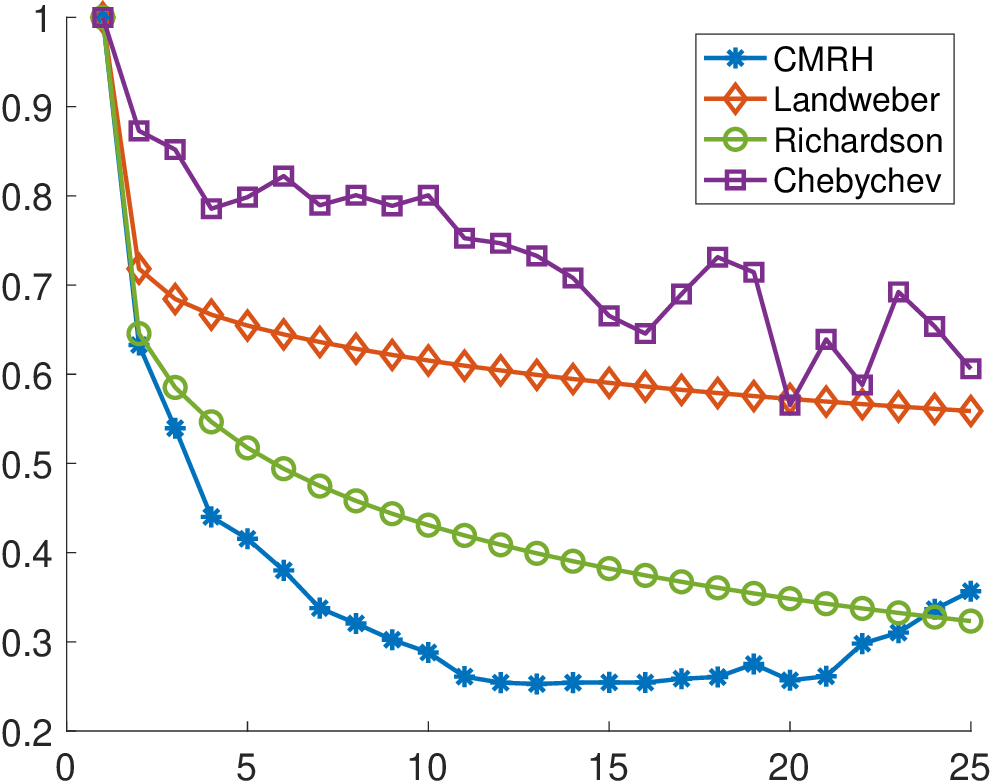} &  \includegraphics[width=3.85cm]{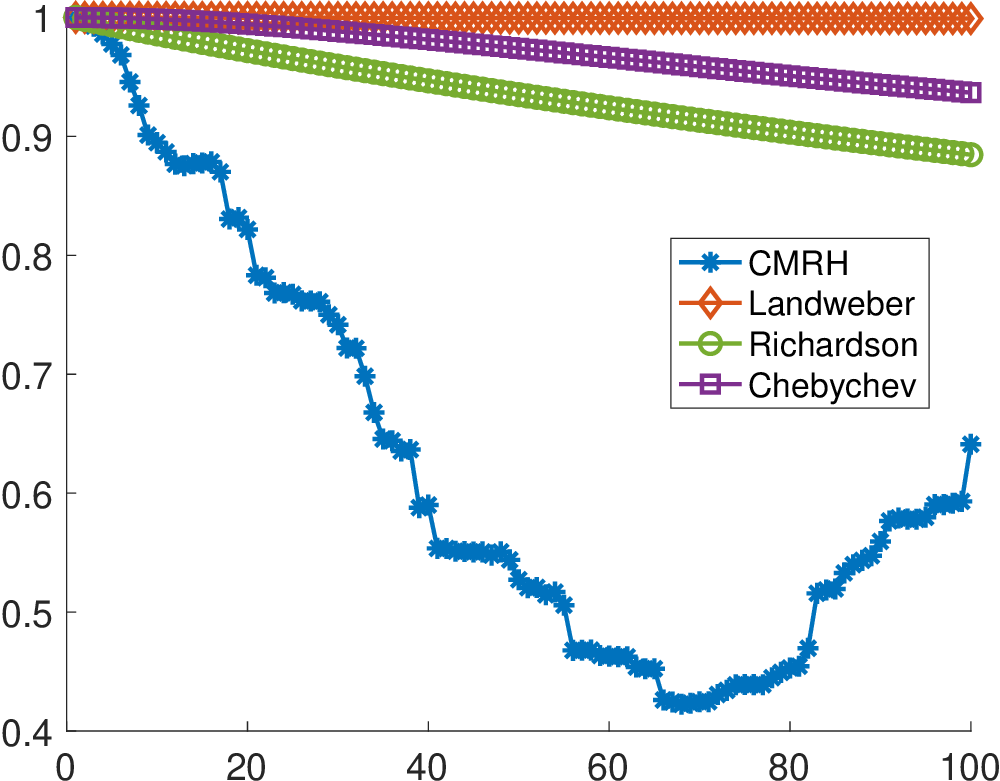} &
    \includegraphics[width=3.85cm]{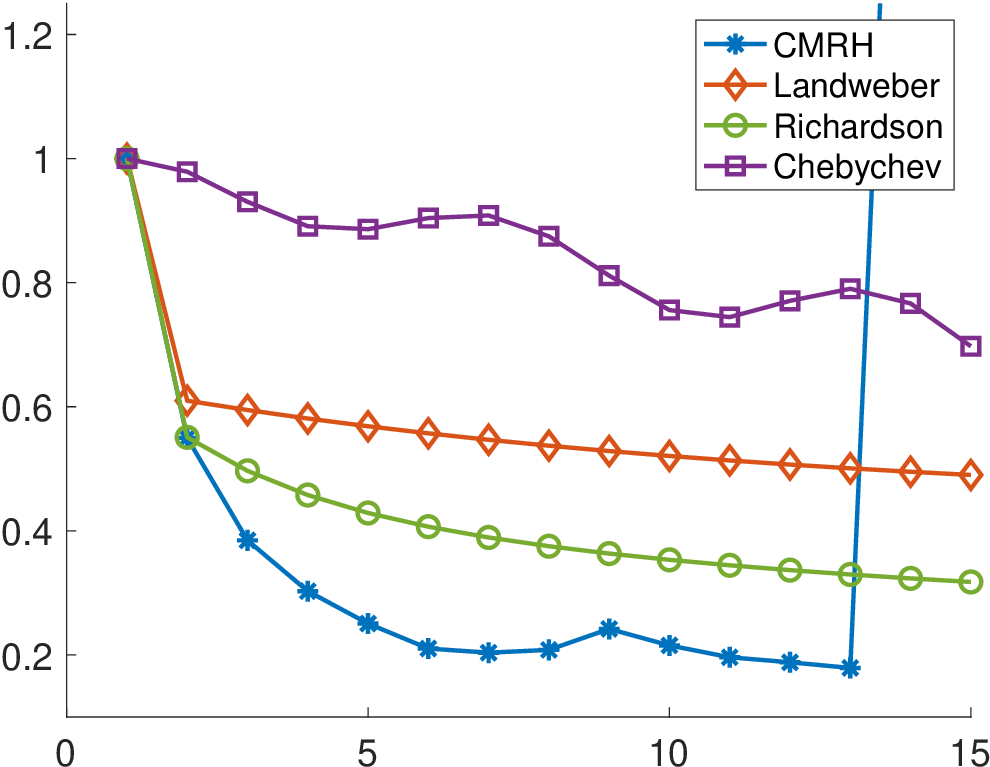}
\end{tabular}
\rev{\caption{Relative error norm histories for problems with different spectral properties.}}
\label{fig:no_inner}
\end{figure}

\rev{In Figure \ref{fig:no_inner}, it can be easily observed that CMRH is much faster than the other inner product free methods (particularly in the case of ill-posed problems). Moreover, CMRH does not require previous knowledge about the system matrix spectrum despite the price of increasing the memory requirements.}

\rev{\subsection{Results on low precision arithmetic} 
In this section we show a few instances of CMRH overcoming pathological behaviours arising when using GMRES in low precision arithmetic (low precision computations are simulated in MATLAB using Higham and Pranesh's {\tt chop} function \cite{higham2019simulating}). First, it should be noted that for large-scale inverse problems, such as those that arise in three-dimensional imaging applications, measured data contains noise, so one cannot expect to compute solutions to double-precision accuracy. Single-precision (32-bit) arithmetic generally provides sufficient accuracy and dynamic range; see, e.g., \cite{anand2009robust,li2024double,maas2021ct}.
However, some challenges might arise when the precision is reduced further; in the following, we highlight such problems using two examples. 
For example, inner products (or norms) can fail in low precision due to under- or overflow when the length of the vectors increases. This can be observed in Figure \ref{fig:low2}, where relative error norm histories are presented when using two NVIDIA quarter precision formats \cite{higham2019simulating}: test problem Deriv2 with $n=4096$ using q52, which has 5 exponent and 2 significand bits (here we observe underflow while computing the norm of $b$), and test problem Shaw with $n=6144$ using q43, which has 4 exponent and 3 significand bits (here we observe overflow while computing the norm of $b$). Test problems Deriv2 and Shaw are part of the Regularization Tools package~\cite{reg}. 
}

\begin{figure}[ht]
\centering
\begin{tabular}{cc}
    {\scriptsize Deriv2} &   {\scriptsize Shaw}\\ 
   \includegraphics[width=3.85cm]{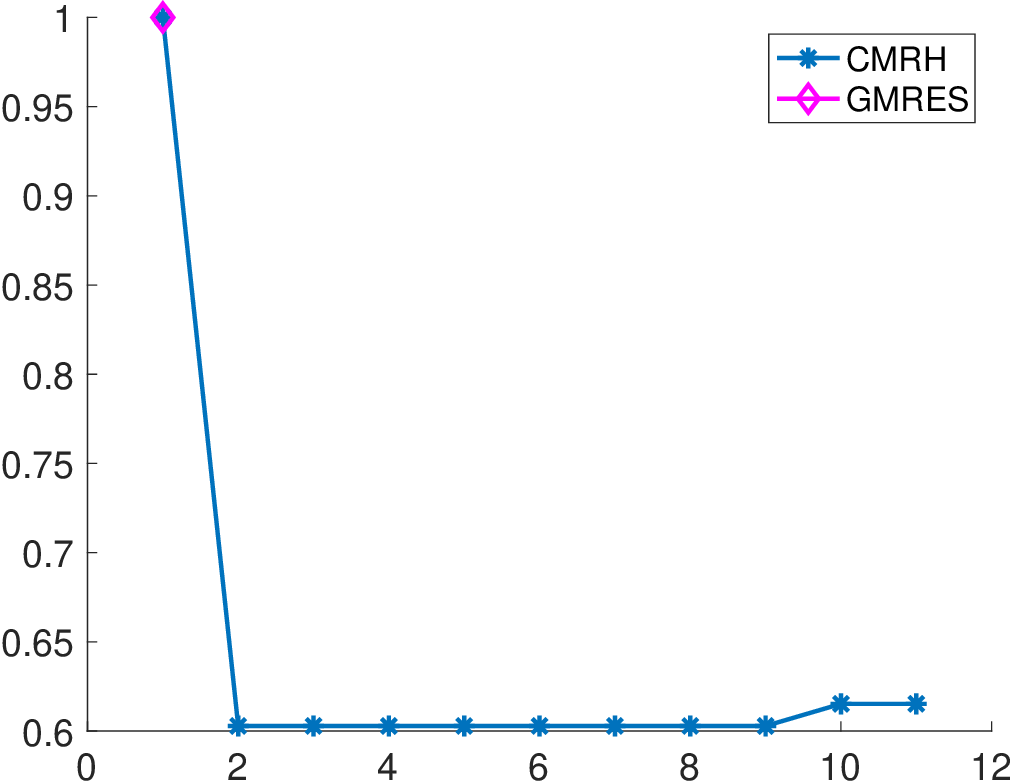} &  \includegraphics[width=3.85cm]{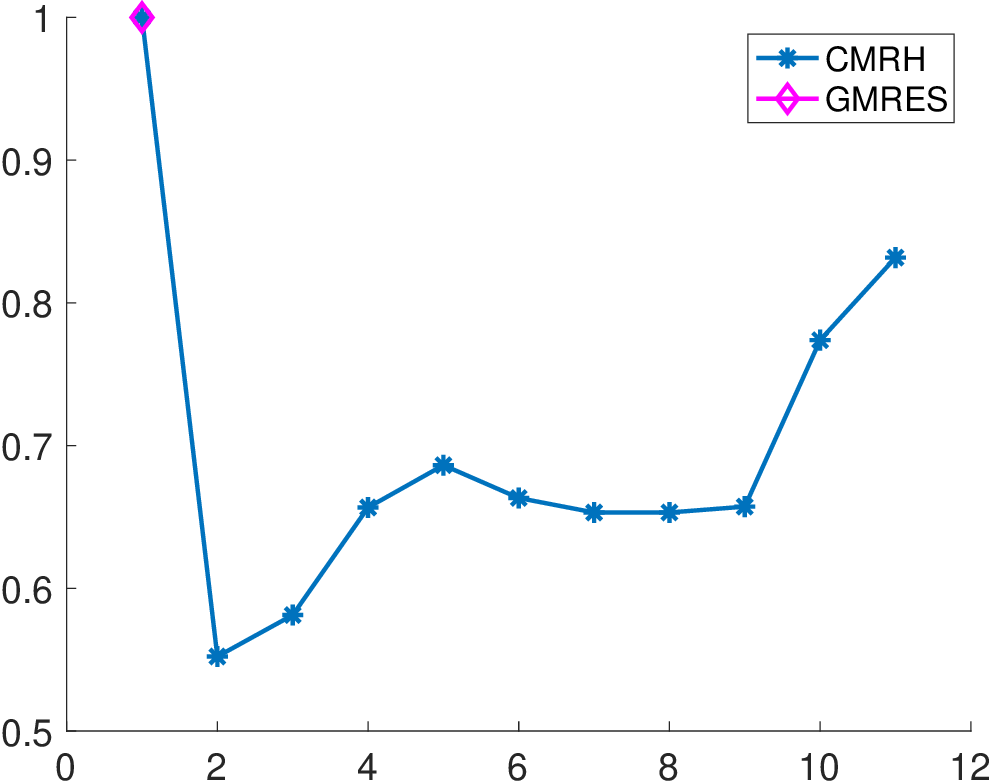} 
\end{tabular}
\rev{\caption{Relative error norm histories for Deriv2 and Shaw in precisions `q52' and `q43' (respectively). Early termination of GMRES is due to underflow (left) and overflow (right) computing vector norms.}}
\label{fig:low2}
\end{figure}

\rev{
Another major issue that can happen when using GMRES in low precision is that the basis vectors for the Krylov subspace can become orthogonal in the given precision causing the algorithm to stop too early. 
This is illustrated in Figure~\ref{fig:low1}, where we apply CMRH and GMRES to the Deriv2 test problem in half-precision arithmetic. In the left panel of Figure \ref{fig:low1} one can observe the relative error norm histories for CMRH and GMRES, where GMRES stops at iteration 4. An easy explanation of this phenomenon can be observed in the right panel of Figure \ref{fig:low1}, where the first 20 diagonal elements of the Hessenberg matrix $H^{A}$ produced by GMRES (computed in double precision) are presented along with the machine precision (in half precision) in logarithmic scale. One can easily observe that, regardless of the particular implementation, the elements in the diagonal of $H^{A}$ quickly fall below machine precision, 
which prompts an early termination of the algorithm as previously observed.}

\begin{figure}[ht]
\centering
\begin{tabular}{cc}
   \includegraphics[width=3.85cm]{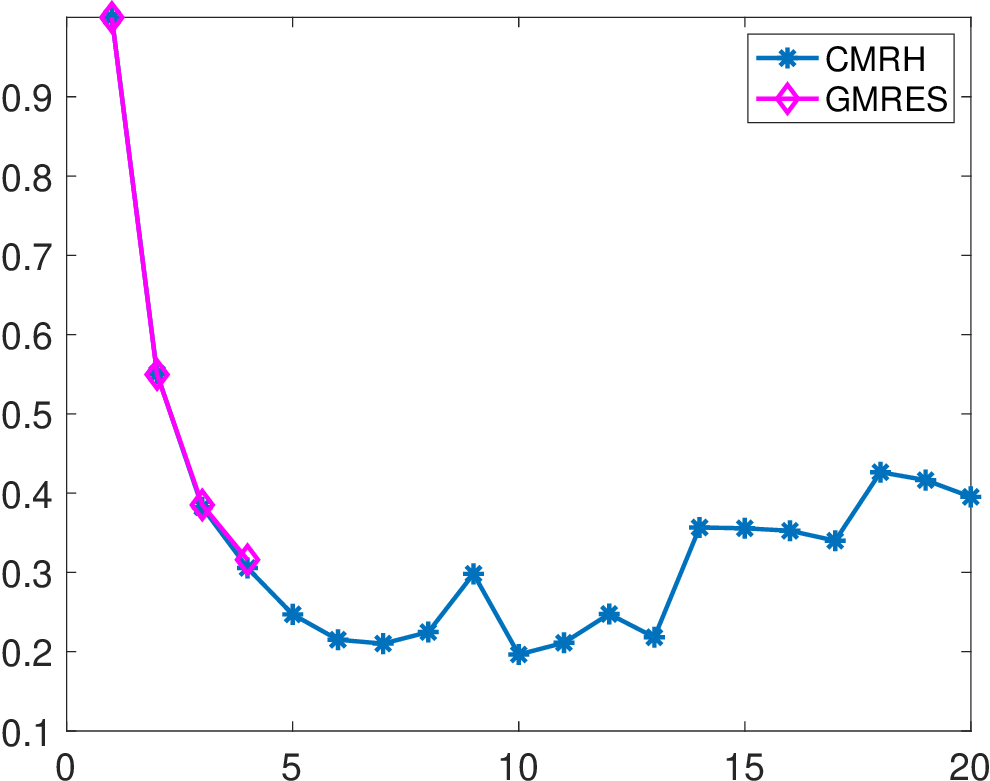} &  \includegraphics[width=3.85cm]{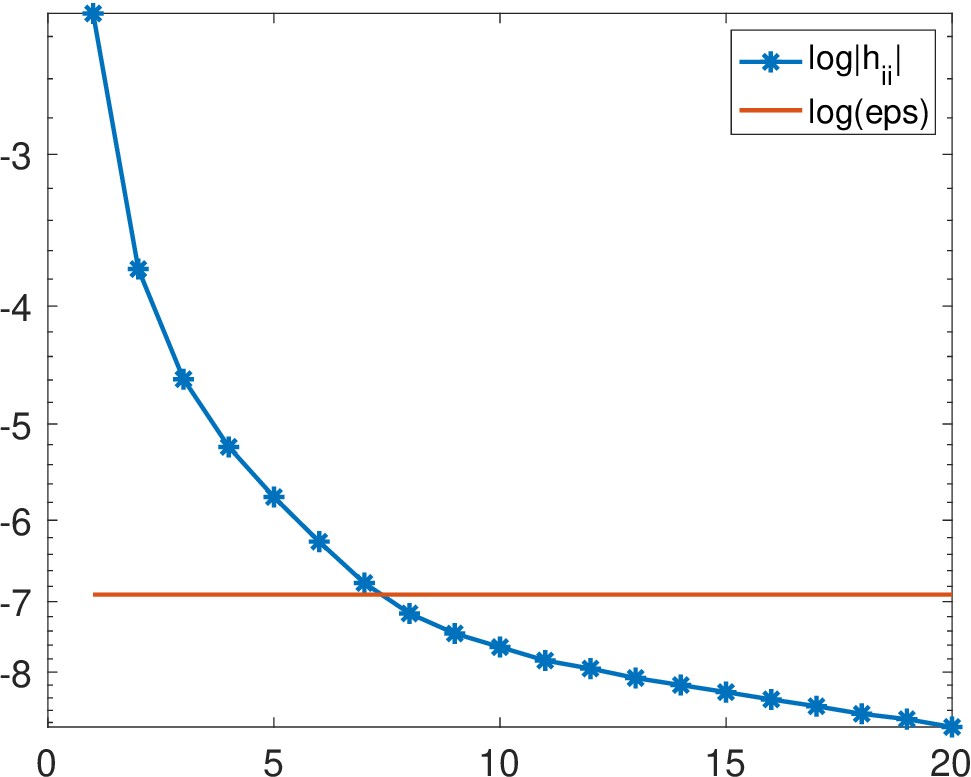} 
\end{tabular}
\rev{\caption{Test problem Deriv2. Left panel: relative error norm histories for GMRES and CMRH, where early stopping of GMRES is caused by numerical orthogonality of the basis vectors. Right panel: first 20 diagonal elements of the Hessenberg (Arnoldi) matrix produced by GMRES (computed in double precision) and machine (half) precision, in logarithmic scale.
}
\label{fig:low1}}
\end{figure}

\subsection{Results on deblurring test problems}\label{subsec:deblur}
To demonstrate the performance of H-CMRH, we consider three test problems: PRblur, PRblurshake, and PRblurspeckle. In particular, PRblur represents Gaussian noise, PRblurshake simulates random motion blur (shaking), and PRblurspeckle models atmospheric blur. 
These problems are 2D deblurring problems from the IRtools package \cite{gazzola2018ir} involving images with $256 \times 256$ pixels, corresponding to $65536 \times 65536$ system matrices $A$, and with added white Gaussian noise with noise levels between $\texttt{nl}=10^{-3}$ and $\texttt{nl}=10^{-1}.$

First, we show the blurred \rev{noisy} images (with $\texttt{nl}=10^{-2}$) along with the reconstructed images that were produced by H-CMRH with the GCV scheme;
\rev{see Figure \ref{fig:PR}. The reconstructions from hybrid GMRES look very similar, as might be expected from the relative errors shown in Figure~\ref{fig:PR2}, so we do not include them here. 
}

\begin{figure}[ht]
\centering
\begin{tabular}{ccc}
    {\scriptsize PRblur} &  {\scriptsize PRblurshake} &  {\scriptsize PRblurspeckle}\\ 
    \includegraphics[width=3.85cm]{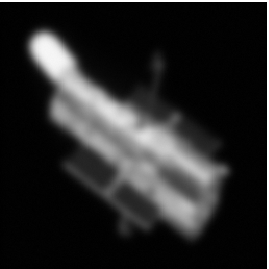} &  \includegraphics[width=3.85cm]{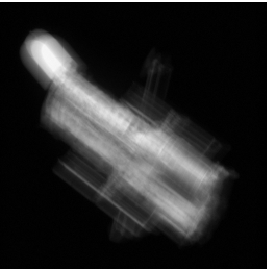} &
    \includegraphics[width=3.85cm]{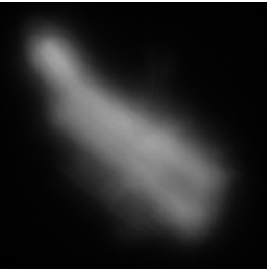} \\ 
\end{tabular} \\
\begin{tabular}{ccc}
    \includegraphics[width=3.85cm]{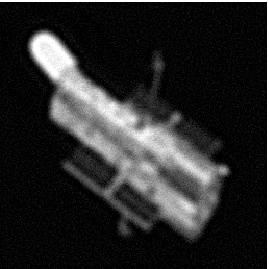} &  \includegraphics[width=3.85cm]{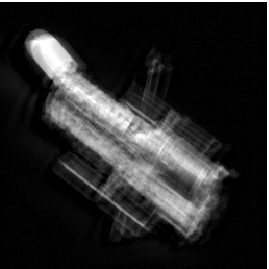} &
    \includegraphics[width=3.85cm]{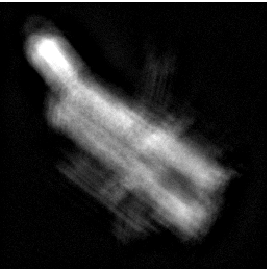} \\
\end{tabular}
\caption{Top: Blurred and noisy measurements with $\texttt{nl}=10^{-2}.$ Bottom: reconstructed images using H-CMRH.}
\label{fig:PR}
\end{figure}
Next, we compare the relative error norms for H-CMRH and hybrid GMRES with the GCV and optimal schemes for choosing the regularization parameter, which can be observed in Figure \ref{fig:PR2}. Note that the star marker represents where the H-CMRH algorithm stops based on the GCV stopping criteria in (\ref{eq:28}) and (\ref{eq:tol}).
\begin{figure}[ht]
\begin{tabular}{ccc}
    {\scriptsize PRblur} &  {\scriptsize PRblurshake} &  {\scriptsize PRblurspeckle}\\ 
   \includegraphics[width=3.92cm]{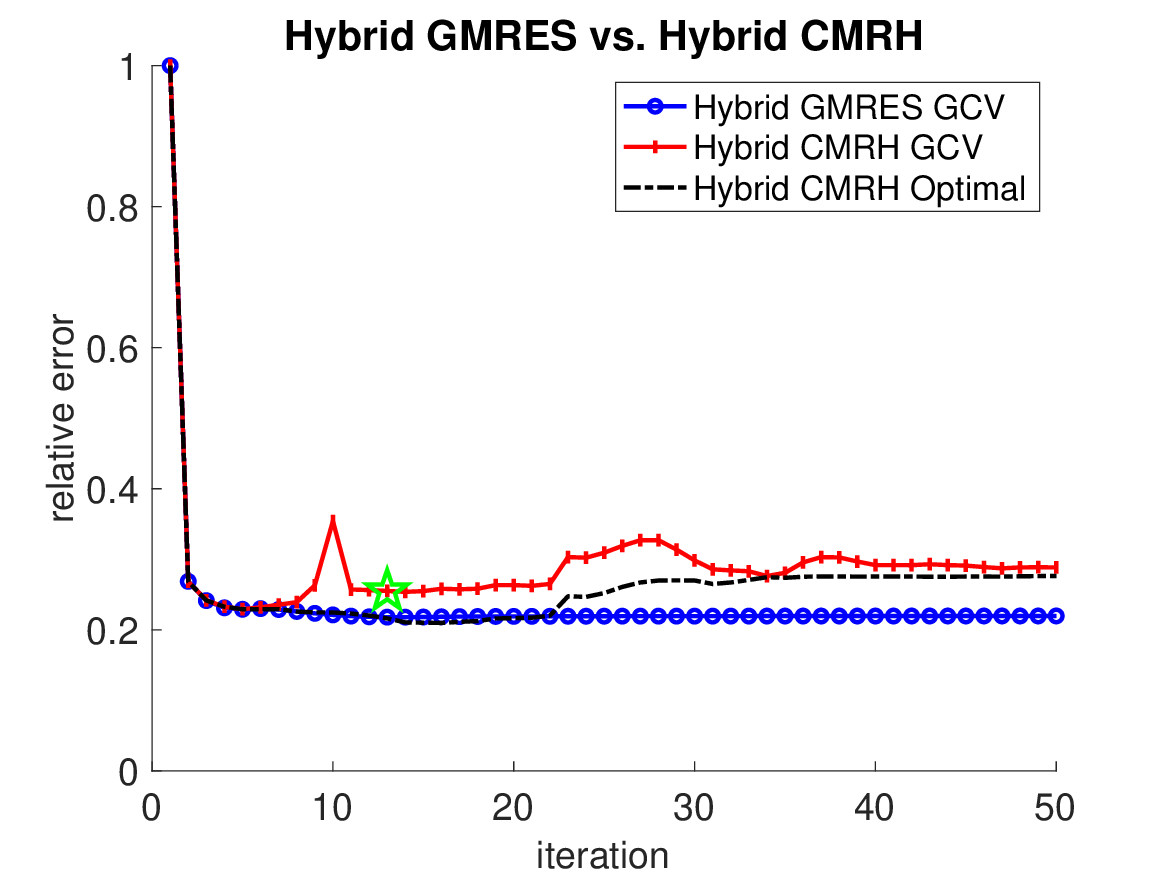} &  \includegraphics[width=3.92cm]{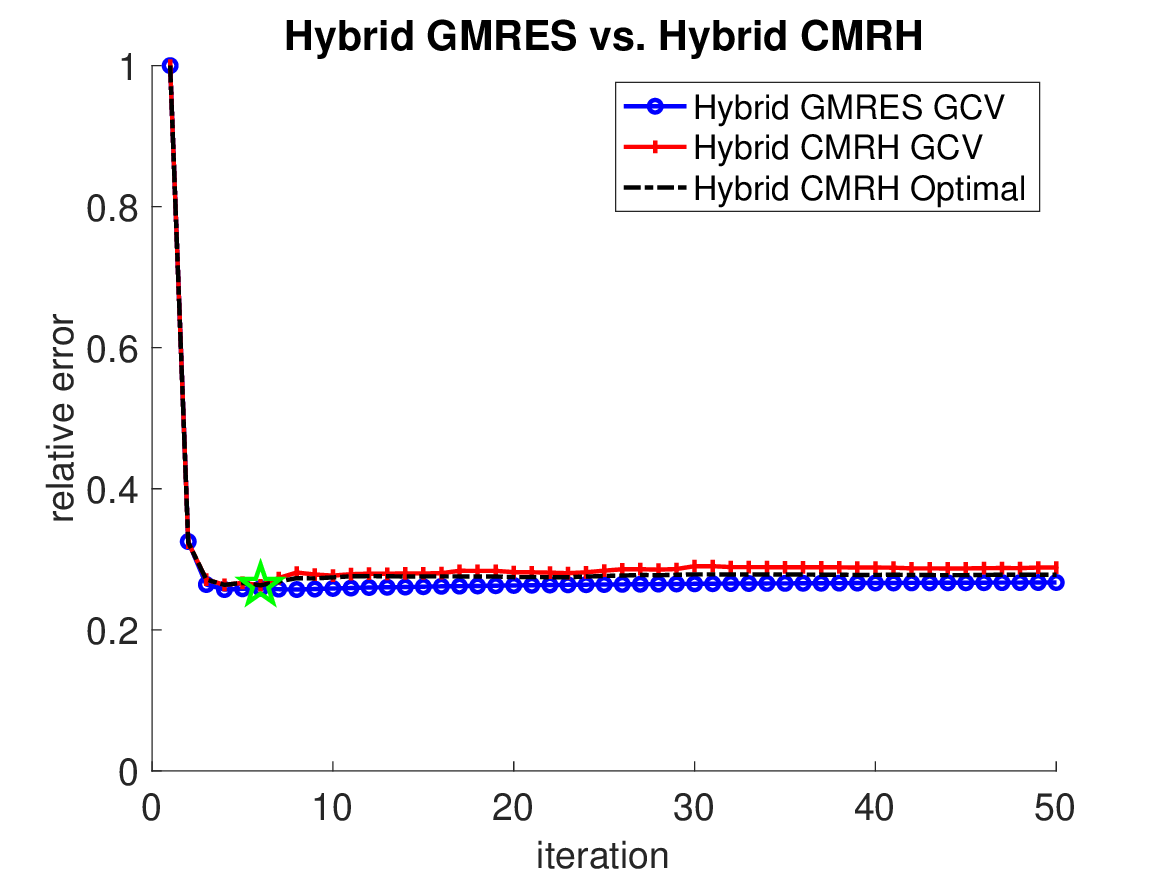} &
    \includegraphics[width=3.92cm]{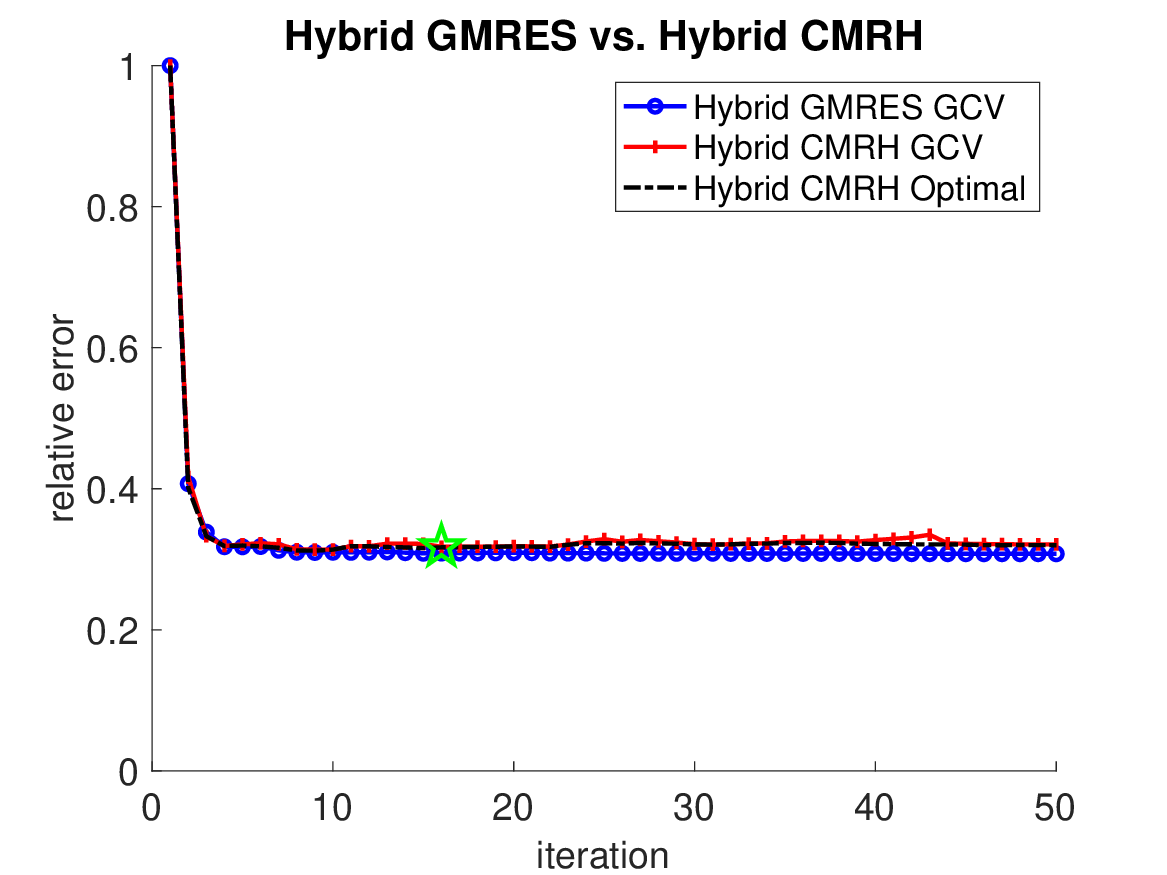}
\end{tabular}
\caption{Relative error norm for 2D deblurring problems using H-CMRH and hybrid GMRES.}
\label{fig:PR2}
\end{figure}
In this figure, we observe that H-CMRH and hybrid GMRES behave in almost an identical manner for PRblurshake and PRblurspeckle. In PRblur, this is also true for the iterations before the stopping iteration. 

\rev{Finally, additional experiments were performed with PRblur, PRblurshake, and PRblurspeckle using various noise levels. Table \ref{tab:Table1} displays the relative error norms for the solutions produced using the GCV stopping criteria for H-CMRH and hybrid GMRES. As proven in Section~\ref{sec:H-CMRH}, the test results shown in Figures \ref{fig:PR}, \ref{fig:PR2}, and Table \ref{tab:Table1} illustrate that H-CMRH and hybrid GMRES have similar regularizing properties, while H-CMRH is an inherent inner product free method. }

\begin{center}
\begin{table}[h!]
\begin{tabular}{ |c|c|c|c|c| } 
\hline
\multicolumn{5}{|c|}{PRblur} \\
\hline
Method & Noise Level & Stopping Iteration & Reg Parameter & Relative Error\\
\hline
\multirow{3}{4em}{Hybrid CMRH} & $10^{-3}$ & $8$ & $5.5042 \times 10^{-5}$ & $0.2060$ \\ 
& $10^{-2}$ & $12$ & $0.0589$ & $0.2550$ \\ 
& $10^{-1}$ & $6$ & $0.1396$ & $0.3098$ \\ 
\hline
\multirow{3}{4em}{Hybrid GMRES} & $10^{-3}$ & $14$ & $0.0124$ & $0.2016$ \\ 
& $10^{-2}$ & $12$ & $0.0561$ & $0.2179$ \\ 
& $10^{-1}$ & $5$ & $0.1930$ & $0.2493$ \\ 
\hline
\end{tabular} 
\\
\begin{tabular}{ |c|c|c|c|c| } 
\hline
\multicolumn{5}{|c|}{PRblurshake} \\
\hline
Method & Noise Level & Stopping Iteration & Reg Parameter & Relative Error\\
\hline
\multirow{3}{4em}{Hybrid CMRH} & $10^{-3}$ & $13$ & $0.0051$ & $0.2891$ \\ 
& $10^{-2}$ & $5$ & $3.7149 \times 10^{-5}$ & $0.2631$ \\ 
& $10^{-1}$ & $5$ & $3.7855 \times 10^{-5}$ & $0.3523$ \\ 
\hline
\multirow{3}{4em}{Hybrid GMRES} & $10^{-3}$ & $4$ & $0.1608$ & $0.2565$ \\ 
& $10^{-2}$ & $4$ & $0.1631$ & $0.2581$ \\ 
& $10^{-1}$ & $3$ & $0.2384$ & $0.3310$ \\ 
\hline
\end{tabular}

\begin{tabular}{ |c|c|c|c|c| } 
\hline
\multicolumn{5}{|c|}{PRblurspeckle} \\
\hline
Method & Noise Level & Stopping Iteration & Reg Parameter & Relative Error\\
\hline
\multirow{3}{4em}{Hybrid CMRH} & $10^{-3}$ & $5$ & $4.0987 \times 10^{-5}$ & $0.3167$ \\ 
& $10^{-2}$ & $15$ & $0.0051$ & $0.3177$ \\ 
& $10^{-1}$ & $5$ & $3.5293 \times 10^{-5}$ & $0.7842$ \\ 
\hline
\multirow{3}{4em}{Hybrid GMRES} & $10^{-3}$ & $5$ & $0.0771$ & $0.3153$\\ 
& $10^{-2}$ & $5$ & $0.0827$ & $0.3181$ \\ 
& $10^{-1}$ & $9$ & $0.2073$ & $0.3414$ \\ 
\hline
\end{tabular}

\caption{\rev{Numerical results for the three test problems PRblur, PRblurshake, and PRblurspeckle for various noise levels.  Regularization parameters and relative error norms correspond to values at the stopping iteration.}}
\label{tab:Table1}
\end{table}
\end{center}

\rev{\subsection{Results on a tomography test problem}  In this section we present an additional large-scale problem from the AIR Tool II package \cite{hansen2018air}, which can also be accessed via IRtools as PRseismic. This is a simulated seismic travel-time tomography test problem. Since PRseismic produces a rectangular system, we apply H-CMRH and hybrid GMRES to the system matrix corresponding to the normal equations, so that $A \in \R^{65536 \times 65536}$ where the images have $256 \times 256 $ pixels.}

\rev{Similar to the image deblurring test problems in Section~\ref{subsec:deblur}, we first show the measured noisy data, the true solution, and the reconstructed images that were produced by H-CMRH and hybrid GMRES with the GCV scheme at the stopping iteration; see Figure~\ref{fig:PRseismic}. Again, the reconstructions obtained using hybrid GMRES and H-CMRH have a similar quality.
The relative error norm histories are shown in Figure \ref{fig:PRseismic_rel_error}. 
Although in this particular example, H-CMRH terminates (at iteration 9) slightly before the optimal stopping iteration, this could be adjusted by modifying tolerances used in the GCV criterion. The GCV stopping criteria for hybrid GMRES (stopping at iteration 18) performed slightly better in this example. 
}

\begin{figure}[ht]
\centering
\begin{tabular}{cc}
    {\large PRseismic} &  {\large PRseismic} \\ 
    \includegraphics[width=3.85cm]{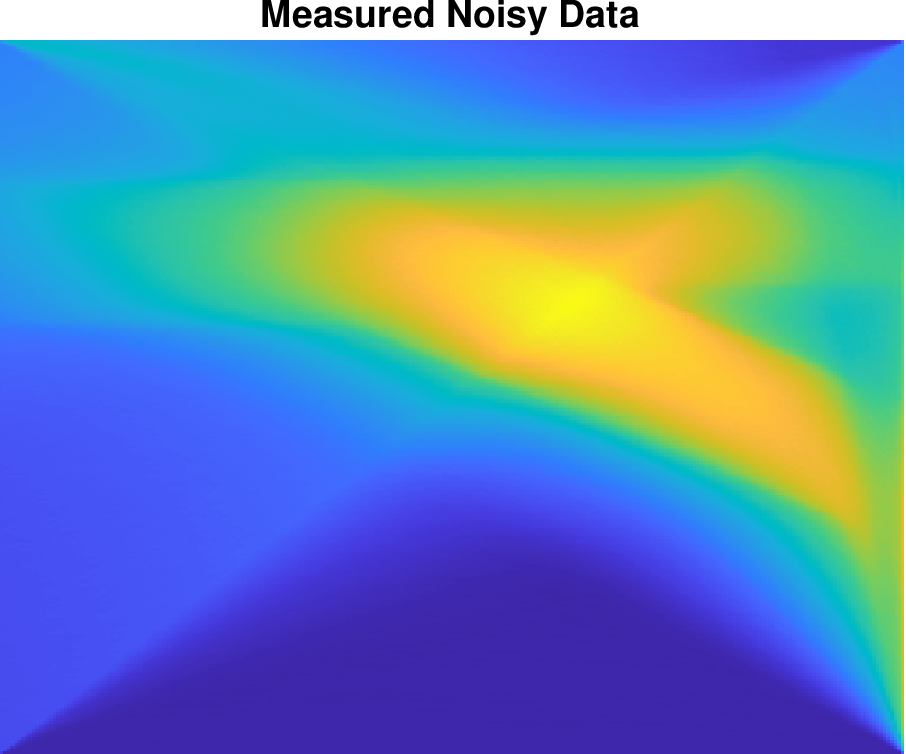} &  \includegraphics[width=3.85cm]{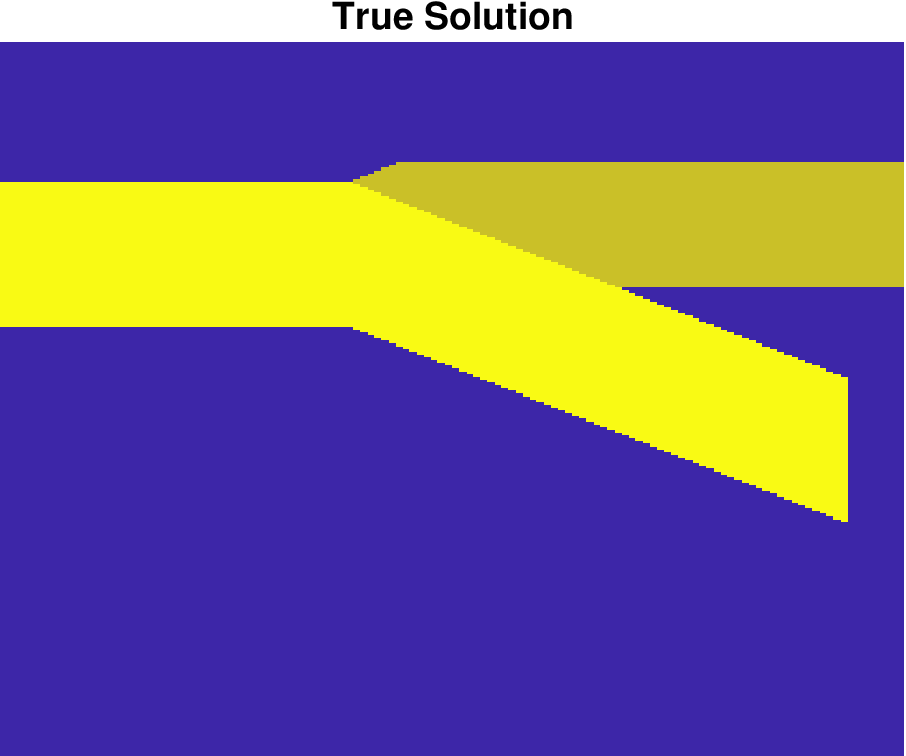}\\ 
\end{tabular} \\
\begin{tabular}{cc}
    \includegraphics[width=3.85cm]{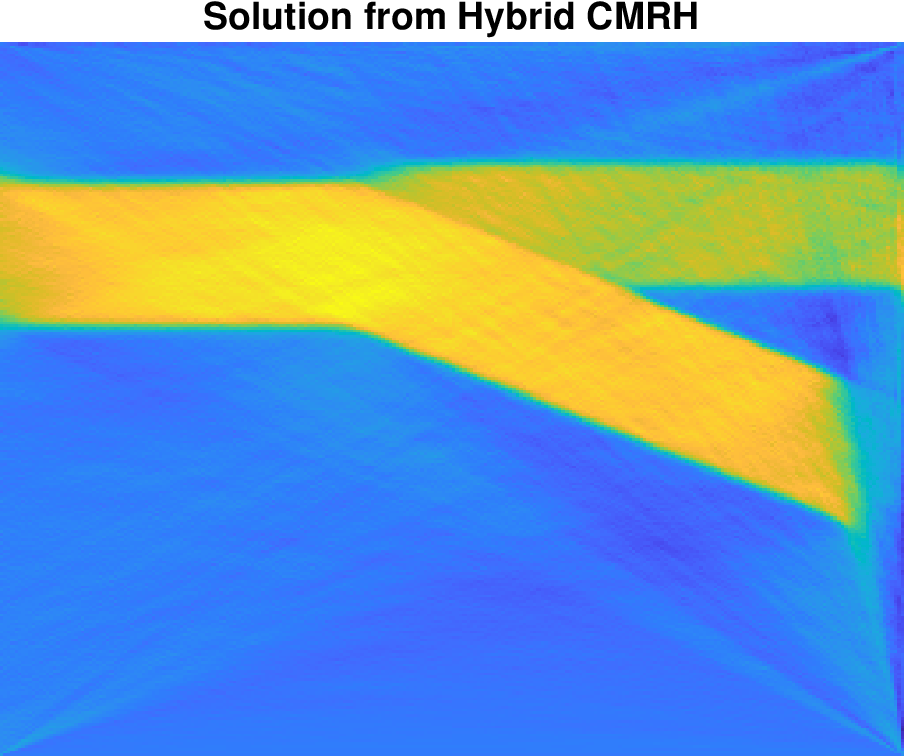} &  \includegraphics[width=3.85cm]{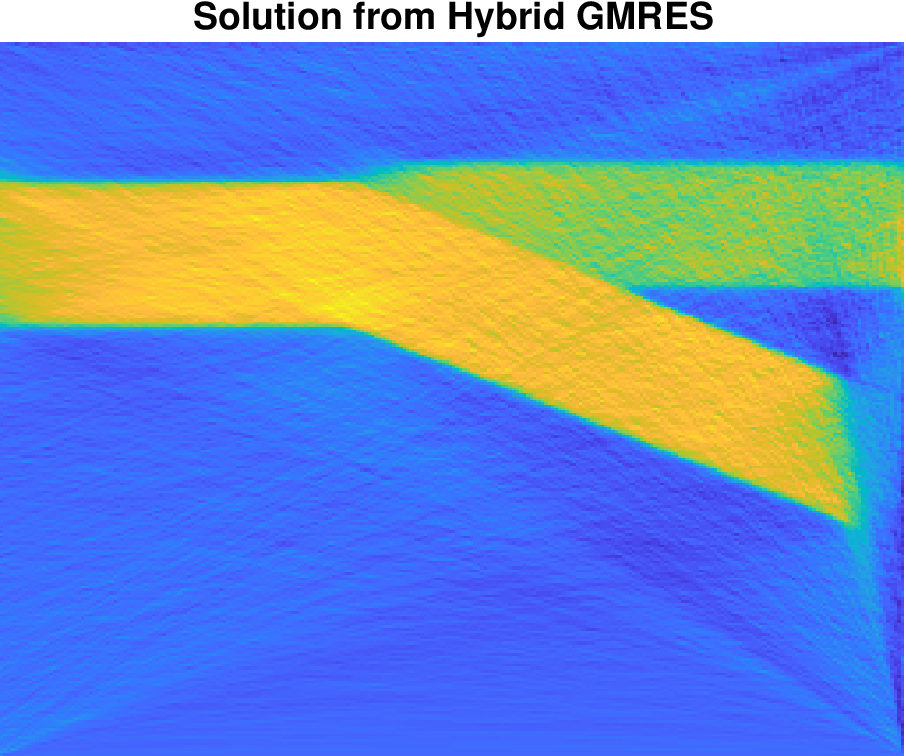}\\
\end{tabular}
\caption{Measured noisy data (top left), true solution (top right), and reconstructed images using H-CMRH and hybrid GMRES (bottom row).}
\label{fig:PRseismic}
\end{figure}

\begin{figure}[ht]
\centering
\begin{tabular}{c}
   \includegraphics[width=6.5cm]{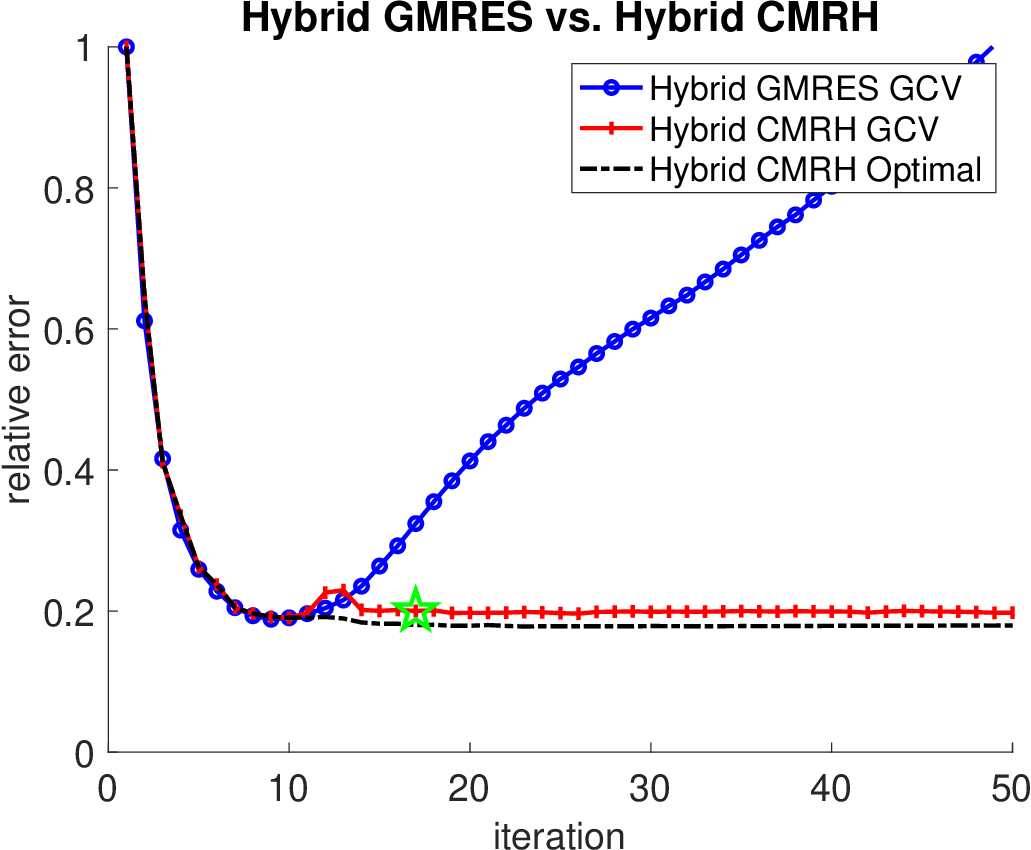} \\ 
\end{tabular} \\
\caption{Relative error norm for seismic problem with $\texttt{nl}=10^{-1}$ using H-CMRH and hybrid GMRES.}
\label{fig:PRseismic_rel_error}
\end{figure}

\rev{Additional experiments were conducted with different noise levels. Table \ref{tab:Table2} provides the relative error norm produced by H-CMRH and hybrid GMRES using the GCV stopping criteria. For $\texttt{nl}= 0.10\%$, H-CMRH greatly outperforms hybrid GMRES. Therefore, we find that H-CMRH performs well when the noise level is increased.}

\begin{center}
\begin{table}[h!]
\begin{tabular}{ |c|c|c|c|c| } 
\hline
\multicolumn{5}{|c|}{ Numerical Results for PRseismic} \\
\hline
Method & Noise Level & Stopping Iteration & Reg Parameter & Relative Error\\
\hline
\multirow{3}{4em}{Hybrid CMRH} & $10^{-3}$ & $39$ & $0.0051$ & $0.0947$ \\ 
& $10^{-2}$ & $9$ & $6.1839 \times 10^{-5}$ & $0.1645$ \\ 
& $10^{-1}$ & $16$ & $4.0069 \times 10^{3}$ & $0.2001$ \\ 
\hline
\multirow{3}{4em}{Hybrid GMRES} & $10^{-3}$ & $17$ & $220.394$ & $0.1175$ \\ 
& $10^{-2}$ & $18$ & $216.23$ & $0.1196$ \\ 
& $10^{-1}$ & $27$ & $121.54$ & $0.5822$ \\ 
\hline
\end{tabular}
\caption{Numerical results for PRseismic with various noise levels.  Regularization parameters and relative errors correspond to values at the stopping iteration.} \label{tab:Table2}
\end{table}
\end{center}

\section{Conclusion}\label{sec:conclusions}

\rev{In conclusion, this work establishes CMRH as a robust iterative regularization method, expanding its potential to a diverse range of applications in large-scale ill-posed problems. Through a detailed study, we offer both theoretical insights and a deeper understanding of the properties of the projected problems involved. The regularization characteristics of CMRH are shown to effectively filter solutions, a conclusion that we support using empirical evidence. 
Once this method is proven to be apt
for ill-posed problems, we show that CMRH is able to deliver a solution of similar quality much faster than other inner product free alternatives, and highlight the advantages of using inner product free methods in low-precision arithmetic scenarios, where CMRH overcomes certain limitations of GMRES. Moreover, we introduce a novel hybrid version of the CMRH method (H-CMRH), the first hybrid method to be inner product free. 
We again stress that it was first necessary to demonstrate the regularization capabilities of CMRH before proposing the hybrid verision, H-CMRH. 
Finally, the performance of CMRH and H-CMRH is validated through its application to various ill-posed problems.}

\rev{In addition, we believe that CMRH is well-suited to be used in combination with randomized methods for least squares problems. This is largely due to the common assumption in randomized linear algebra that orthogonalization poses a computational bottleneck which can be alleviated using randomization techniques. Although a detailed exploration of this idea falls outside the scope of this paper, we note its potential and leave this for future investigation.
}

\bibliographystyle{siamplain}
\bibliography{references}
\end{document}